    \theoremstyle{plain}
    \newtheorem{thm}{Theorem}[section]
    \newtheorem{prop}[thm]{Proposition}
    \newtheorem{cor}[thm]{Corollary}
    \newtheorem*{thm*}{Theorem}
    \theoremstyle{definition}
    \newtheorem{definition}[thm]{Definition}
    \newtheorem{ex}[thm]{Example}
\DeclareFontFamily{U}{matha}{\hyphenchar\font45}
\DeclareFontShape{U}{matha}{m}{n}{
      <5> <6> <7> <8> <9> <10> gen * matha
      <10.95> matha10 <12> <14.4> <17.28> <20.74> <24.88> matha12
      }{}
\DeclareSymbolFont{matha}{U}{matha}{m}{n}
\DeclareMathSymbol{\dashV}         {3}{matha}{"2D}
    \newcommand{\figdir}{pstex}
    \tikzset{
        %Define standard arrow tip
        >=stealth',
        %Define style for boxes
        punkt/.style={
            rectangle,
            rounded corners,
            draw=black, very thick,
            text width=6.5em,
            minimum height=2em,
            text centered},
        % Define arrow style
        pil/.style={
            ->,
            thick,
            shorten <=2pt,
            shorten >=2pt,}
    }
\let\old@font@info\@font@info
\def\@font@info#1{%
\expandafter\ifx\csname\detokenize{#1}\endcsname\relax
  \old@font@info{#1}%
\fi
\expandafter\xdef\csname\detokenize{#1}\endcsname{}%
}
\newcommand{\cA}{\mathcal{A}}
\newcommand{\cB}{{\mathcal B}}
\newcommand{\cC}{\mathcal{C}}
\newcommand{\cD}{{\mathcal D}}
\newcommand{\DD}{\cD}
\newcommand{\cE}{{\mathcal E}}
\newcommand{\cX}{\mathcal{X}}
\newcommand{\cY}{\mathcal{Y}}
\DeclareMathOperator{\id}{id}
\DeclareMathOperator{\Id}{Id}
\DeclareMathOperator{\Nat}{Nat}
\DeclareMathOperator{\Hom}{Hom}
\DeclareMathOperator{\ev}{ev}
\DeclareMathOperator{\coev}{coev}
\newcommand{\op}{\mathrm{op}}
\newcommand{\AdjL}{\mathcal{A}\mathrm{dj}_{\mathrm{l}}}
\newcommand{\AdjR}{\mathcal{A}\mathrm{dj}_{\mathrm{r}}}
\newcommand{\TwadjLL}{\mathcal{A}\mathrm{dj}^{\mathrm{2L}}_{\mathrm{l}}}
\newcommand{\TwadjLR}{\mathcal{A}\mathrm{dj}^{\mathrm{2L}}_{\mathrm{r}}}
\newcommand{\C}{\mathbb C}
\newcommand{\bicat}{\mathbb}
\newcommand{\CAT}{\bicat{C}\mathrm{AT}}
\newcommand{\PROF}{\bicat{P}\mathrm{ROF}}
\numberwithin{equation}{section}
\renewcommand{\phi}{\varphi}
\newcommand{\define}{\textbf}
\newcommand{\nattrans}{\Rightarrow}
\newcommand{\extranat}{\mathrel{\mkern5mu  \vcenter{\hbox{$\shortmid$}}%
                    \mkern-10mu{\Rightarrow}}}
\newcommand{\profto}{\relbar\joinrel\mapstochar\joinrel\rightarrow} % from nLab
\newcommand{\termcat}{\{\star\}}
\newcommand{\companion}[1]{{#1}_\mathrm{c}}
\newcommand{\conjoint}[1]{{#1}^\mathrm{c}}
\DeclareMathOperator{\Vect}{\mathrm{Vect}}
\DeclareMathOperator{\Set}{\mathrm{Set}}
\newcommand{\one}{\mathbb{1}}
\newcommand{\monstr}[1]{\omega}
\newcommand{\mate}[1]{\overline{#1}}
\newcommand{\internalhoml}[2]{{[#1, #2]}_\mathrm{L}}
\newcommand{\internalhomr}[2]{{[#1, #2]}_\mathrm{R}}
\newcommand{\internalhom}[2]{{[#1, #2]}}
\newcommand{\leftlolli}{\multimap}
\newcommand{\rightlolli}{\multimapinv}
\newcommand{\lollihomr}[2]{#2 \multimapinv #1}
\newcommand{\lollihoml}[2]{#1 \multimap #2}
\newcommand{\leftsidedadj}{\dashV_\mathrm{L}}
\newcommand{\rightsidedadj}{\dashV_\mathrm{R}}
\newcommand{\jL}{j_\mathrm{l}}
\newcommand{\jR}{j_\mathrm{r}}
\newcommand{\secondc}{a}
\newcommand{\hatbeta}{\hat{\beta}}
\newcommand{\isoarrow}{\xrightarrow{\raisebox{-0.7ex}[0ex][0ex]{$\sim$}}}
\newcommand*\cocolon{%
        \nobreak
        \mskip6mu plus1mu
        \mathpunct{}%
        \nonscript
        \mkern-\thinmuskip
        {:}%
        \mskip2mu
        \relax
}
\begin{document}

\title[Extranatural transformations and conjugation]{Extranatural transformations,\\adjunctions of two variables\\and conjugation}
\date{}
\author{Simon Willerton}
\address{University of Sheffield}
\email{s.willerton@sheffield.ac.uk}

\begin{abstract}
    Adjunctions of two variables generalize the relationship between tensor product and the internal hom functor in a closed monoidal category. For a pair of ordinary adjunctions $(F\dashv U, F'\dashv U')$ conjugation relates natural transformations of the form $F\Rightarrow F'$ with natural transformations of the form $U' \Rightarrow U$. We look at conjugation for general two variable adjunctions. It is useful in the context of Grothendieck's six operations as we will show that this is an appropriate way to view the constructions of Fausk, Hu and May where they discuss things like the projection formula and internal adjunctions.

    Extensive use is made of surface diagram notation as this is a helpful way to keep track of the three dimensions of composition. This also places the work in the context of formal category theory as, for instance, closed monoidal categories are defined without reference to objects or morphisms inside them. In an appendix it is explained how an appropriate setting for this perspective is the monoidal double category of functors and profunctors, using the fact that the bicategory of profunctors has duals.
\end{abstract}

\maketitle

\setcounter{tocdepth}{1}
\tableofcontents
\thispagestyle{empty}
\enlargethispage*{2em}

\section{Introduction}
\label{sec:intro}

We start with the initial motivation for this paper, which was to give a good categorical framework for a phenomenon which crops up in various places, as noted by Fausk, Hu and May~\cite{FauskHuMay:Isomorphisms}, namely that of certain pairs of isomorphisms that occur in the context of closed monoidal categories, in particular in the context of Grothendieck's six operations.  The introduction then goes on to give an overview of the ideas of the paper.  The perspective is that of formal category theory but it is written with the intention of being accessible to the readers of Fausk, Hu and May's paper; the more formal categorical context, involving the monoidal equipment of profunctors, has been placed in Appendix~\ref{app:double-category}.

A further motivation for this work was to develop the diagrams as a book-keeping device for structural maps that arise in the area of Hopf monads and this should appear in work with Christos Aravanis~\cite{AravanisWillerton:Hopf-monads} building on work from his thesis~\cite{Aravanis:thesis}.

\subsection{The conjugate pairs of Fausk, Hu and May}
We will start by looking at two examples of the phenomenom we will be interested in, which were highlighted by Fausk, Hu and May~\cite{FauskHuMay:Isomorphisms}.

\begin{ex}[Strong monoidal functor and internal hom]
    \label{ex:strong-mon-functor-internal-hom}
    Suppose that \(f^\ast\colon \cY \to \cX\) is a monoidal functor between monoidal categories.  By definition this is equipped with a structual natural transformation $\omega$ with components of the form
    \[
        f^\ast(y) \otimes f^\ast(y') 
        \xrightarrow{\omega_{y,y'}} f^\ast(y \otimes y').
    \]
    If this natural transformation is an isomorphism then \(f^\ast\) is %said to be 
    \define{strong monoidal}.

    Suppose further that \(f^\ast\) has a right adjoint \(f_\ast\), so \(f^\ast \dashv f_\ast\), and also both \(\cX\) and \(\cY\) are \emph{closed} monoidal, with internal homs denoted with \(\internalhom{\cdot}{\cdot}\), then you can use this data, together with the above natural transformation to construct a canonical natural transformation $\overline\omega$ with components of the form
    \[
        \internalhom{y}{f_\ast(x)} 
        \xrightarrow{\overline{\omega}_{x, y}} 
        f_\ast\bigl( \internalhom{f^\ast(y)}{x}\bigr).
    \]
    If this natural transformation $\overline{\omega}$ is an isomorphism then we say that \(f^\ast\) and \(f_\ast\) form an \define{internal adjunction}.
\end{ex}

\begin{ex}[Projection formula and strong closed monoidal functors]

    Suppose that, as above, \(f^\ast\colon \cY \to \cX\) is a monoidal functor between monoidal categories, however, now it has a \emph{left} adjoint \(f_!\), so \(f_! \dashv f^\ast\).  Using the structural natural transformation $\omega$ from above and the adjunctions we can construct a natural transformation $\pi$ with components of the form
    \[
        f_!\bigl(x \otimes f^\ast (y)\bigr)
        \xrightarrow{\pi_{x, y}}
        f_!(x) \otimes y.
    \]

    If this natural transformation is an isomorphism then it is said that the \define{projection formula} holds.  (In logic, the term `Frobenius formula' or `Frobenius reciprocity' is sometimes used.)  

    In the case where the categories \(\cX\) and \(\cY\) are actually closed monoidal, so there are internal hom functors for both \(\cX\) and \(\cY\).  Then from the monoidal structure natural transformation and the various adjunctions one can also construct a canonical natural transformation $\overline{\pi}$ with components of the form
    \[
        f^\ast\bigl(\internalhom{y}{y'} \bigr)
        \xrightarrow{\overline\pi_{y, y'}}
        \internalhom{f^\ast(y)}{f^\ast(y')}.
    \]
    If this natural transformation is an isomorphism then we can say that \(f^\ast\) is \define{strong closed monoidal}.

    Again, it is fact that the first of this pair of natural transformations is an isomorphism if and only if the second is an isomorphism.
\end{ex}

\enlargethispage*{2em}
In both of the above two situations, Fausk, Hu and May describe the two natural transformations as being ``conjugate'', though they do not define what they mean by this.  Their terminology seems to be alluding to the notion of conjugate natural transformations which occurs when you have a pair of adjunctions.  They say the following~\cite[p.~111]{FauskHuMay:Isomorphisms}.

\begin{quote}
    ``%By the Yoneda lemma and a check of maps, these show immediately that the [first isomorphism] is equivalent to the [second isomorphism]. [\dots] 
    Systematic recognition of such conjugate pairs of isomorphisms can substitute for quite a bit of excess verbiage in the older literature. We call this a `comparison of adjoints' and henceforward leave the details of such arguments to the reader.''
\end{quote}
One of the goals of this paper is to provide an appropriate categorical formulation of this, by showing that what is happening in these cases is conjugation for adjunctions of two variables and to provide an appropriate diagrammatic language for this.  Further categorical context for the diagrammatic language is in Appendix~\ref{app:double-category}.
 
\subsection{Classical adjunctions, conjugation and string diagrams}  
(More details on this are in Section~\ref{sec:adjunctions-and-conjugation}.)
If you have two adjunctions, \(F\colon \cC \rightleftarrows \cD \cocolon U\) and \(F'\colon \cC \rightleftarrows \cD \cocolon U'\), then each natural transformation $\theta \colon F \nattrans F'$
corresponds to -- or is \define{conjugate} to -- a natural transformation $\phi \colon U' \nattrans U$.
 
Conjugation of a natural transformation can be defined using the units -- $\eta$ and $\eta'$ -- and counits -- $\epsilon$ and $\epsilon'$ -- of the two adjunctions.  So, given $\theta$, the components of \(\phi\) can be defined as the following composite:
\[
    U'(d) 
    \xrightarrow{\eta_{U'(d)}} 
    U \circ F \circ U'(d)
    \xrightarrow{U \circ \theta_{U'(d)}}
    U \circ F' \circ U'(d)
    \xrightarrow{U \circ \epsilon'(d)}
    U(d).
\]
In a component-free fashion $\phi$ is written, of course, as follows:
\begin{align*}
    \phi &= (U\circ \epsilon') \odot (U\circ \theta \circ U') \odot (\eta \circ U')
    %\\
    %&= ({\Id_{U}} \circ {\epsilon'}) \odot ({\Id_{U}} \circ \theta \circ {\Id_{U'}}) \odot (\eta \circ\Id_{U'}).
\end{align*}

Here we are working in the \(2\)-cateogry of categories, using two orthogonal types of composition, so it is sensible to switch from a linear notation to a \(2\)-dimensional notation.  We will use string diagrams here -- rather than, say, pasting diagrams -- as the relationship between algbebra and the topology of its representation allows the use of visual reasoning.  The convention here will be that diagrams are read from right to left and bottom to top; this is indicated %in the middle diagram 
below.  
%On the left below, we see how a generic natural transformation of the form $\alpha \colon H \circ G \nattrans K$ is represented for functors $G\colon \cB \to \cC$, $H\colon \cC\to \cD$ and $K \colon \cB \to \cD$.  On the right below we see how $\phi \colon U' \nattrans U$, the conjugate of $\theta\colon F\nattrans F'$, is represented.
The conjugate, \(\phi\), of \(\theta\) is represented as follows.
\[
    %\pstex{sample_nat_trans}
    \phi = \raisebox{-.45\height}{\input{\figdir/conjugation_intro.pstex_t}}
    \quad\qquad 
    \raisebox{-.45\height}{\begin{picture}(0,0)%
\includegraphics{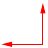}%
\end{picture}%
%
%  encoding: UTF-8 
%
\setlength{\unitlength}{4144sp}%
\begingroup\makeatletter\ifx\SetFigFont\undefined%
\gdef\SetFigFont#1#2{%
  \fontsize{#1}{#2pt}%
  \selectfont}%
\fi\endgroup%
\begin{picture}(354,354)(1474,-43)
\end{picture}%
}
    %\quad\qquad
    %\pstex{conjugation_intro}
\]

Various nice properties of conjugation are easily seen diagrammatically, these are given in Section~\ref{sec:adjunctions-and-conjugation}; a key property is that a natural transformation $\theta\colon F\nattrans F'$ is invertible if and only if its conjugate $\phi \colon U' \nattrans U$ is invertible.

%\subsection{Formal category theory}
Here we are moving away from talking about components of natural transformations and towards arguing with string diagrams -- below we will use surface diagrams.  This is taking a more \emph{formal category theory} approach where we are thinking about categories as being \(0\)-cells in the \(2\)-category of categories, and using functors (\(1\)-cells) and natural transformations (\(2\)-cells) in definitions and arguments rather than the internal structure of categories such as objects and morphisms.  This perspective can be clarifying, although not always the right one if you want to do actual calculations; importantly it is useful when you want to do something like category theory in other \(2\)-categories, for instance in the \(2\)-cateogry of categories enriched over some base.  This perspective is analogous to the one that is adopted when you first do category theory and, for instance, phrase things about sets, such as the definition of a monoid, in terms of sets and functions -- i.e., objects and morphisms in the category of sets -- rather than in terms of elements of sets.

% In this paper we will be looking at monoidal categories and related structure in the context of the monoidal $2$-cateogry of cateogries, functors and natural transformations equipped with the cartesian product of categories (in fact we will enhace this with extranatural transformations).  This is akin to looking at monoids as monoid objects in the category of sets.  In that case we have element-free definitions and arguments, here we have object-free definitions and arguments.  The main reason that objects of a category will appear here is as an aid to you, the reader, when things are translated into language perhaps more familiar to some, for example writing things out in terms of components and natural transformations.

% As with many changes of perspective, this makes certain things more apparent but makes other things harder to see.  A plurality of perspectives is generally a plus point.  A key part of this perspective for me is the use of string diagrams.  In the case of monoid objects in the monoidal category of sets the use of string diagrams is a common tool these days.  Two dimensional string diagrams are used to represent one-cells in the monoidal category of sets.  Analogously, here we will use three-dimensional surface diagrams to represent \(2\)-cells, i.e.~natural transformations in the monoidal $2$-category of categories.

\subsection{Surface diagrams}
\label{subsec:surface-diagrams}
\enlargethispage*{3em}

We will be working with the monoidal \(2\)-cateogry of categories, where the monoidal product is $\times$, the cartesian product.  This product gives a third orthogonal type of composition; it is possible and appropriate to represent this with a third orthogonal direction in our diagrams, this direction will be into the plane of the paper or screen.  So a functor $K \colon \cC \times \cD \to \cB$ will be represented as pictured below, with the product direction being read from front to back, illustrated in the middle picture.  On the left it is drawn in a box to give a sense of the directions, but in general, the box won't be drawn, as on the right.
\[
    \raisebox{-.45\height}{\input{\figdir/sample_monoidal_one_cell.pstex_t}}
    \qquad
    \raisebox{-.45\height}{\begin{picture}(0,0)%
\includegraphics{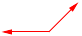}%
\end{picture}%
%
%  encoding: UTF-8 
%
\setlength{\unitlength}{4144sp}%
\begingroup\makeatletter\ifx\SetFigFont\undefined%
\gdef\SetFigFont#1#2{%
  \fontsize{#1}{#2pt}%
  \selectfont}%
\fi\endgroup%
\begin{picture}(609,264)(1069,-538)
\end{picture}%
}
    \qquad
    \raisebox{-.45\height}{\input{\figdir/sample_monoidal_one_cell_without_box.pstex_t}}
\]

Natural transformations are then given as going vertically between such pictures.  For example, if we have monoidal categories $\cC$ and $\cD$ with tensor products $\otimes_{\cC} \colon \cC \times \cC \to \cC$ and $\otimes_{\cD} \colon \cD \times \cD \to \cD$ and we have a functor $F\colon \cC \to \cD$, then a natural transformation $\omega$ which has components $\omega_{c, c'}
\colon F(c) \otimes_\cD F(c') \to F(c \otimes_\cC c')$ is of the form $\omega\colon {\otimes_\cD} \circ (F\times F) \nattrans F\circ {\otimes_\cC}$ and so would be pictured as the following.  Again, the boxed form is given here to help orient you and help you get used to the directions.
\[
    \raisebox{-.45\height}{\input{\figdir/sample_monoidal_two_cell_boxed.pstex_t}}
    \qquad
    \raisebox{-.45\height}{\begin{picture}(0,0)%
\includegraphics{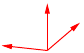}%
\end{picture}%
%
%  encoding: UTF-8 
%
\setlength{\unitlength}{4144sp}%
\begingroup\makeatletter\ifx\SetFigFont\undefined%
\gdef\SetFigFont#1#2{%
  \fontsize{#1}{#2pt}%
  \selectfont}%
\fi\endgroup%
\begin{picture}(622,384)(1080,-523)
\end{picture}%
}
\]
In practice, to reduce clutter, labels will not always be put on diagrams when they are clear from the context.

One extra feature of the $2$-category of categories that we wish to capture in the diagrammatic notation is that of opposite category.  Given a functor \(H\colon\cC\to \cD\), we
obtain a functor \(H^\op\colon\cC^\op \to \cD^\op\) between the opposite categories, going in the same direction.  However, given
a natural transformation such as \(\alpha \colon H \circ G \nattrans K\)
we obtain a natural transformation \(\alpha^\op\colon K^\op\nattrans 
H^\op \circ G^\op\) going in the opposite direction.  
In our surface diagrams, we can denote parts of the surface which correspond to an opposite \(\cC^\op\) by shading that part of a surface and labelling it with \(\cC\).  From the above we get that ``shaded'' natural transformations are ``turned
upside-down'':
\[
    \raisebox{-.45\height}{\input{\figdir/sample_nat_trans.pstex_t}}
    \quad\text{gives rise to}\quad 
    \raisebox{-.45\height}{\input{\figdir/sample_op_nat_trans.pstex_t}}
    \quad\text{denoted}\quad
    \raisebox{-.45\height}{\input{\figdir/sample_op_shade_nat_trans.pstex_t}}.
\]

Suppose now we are in the context of closed monoidal categories.  Consider the natural transformation \(\overline\omega\) of Example~\ref{ex:strong-mon-functor-internal-hom}.  It will be convenient to switch notation for internal hom from \(\internalhom{c}{c'}\) to the ``infix lollipop notation'' \(c \leftlolli c'\).  We think of the internal hom as a functor \(\leftlolli\colon \cC^\op \times \cC \to \cC\).  We write the components of \(\overline\omega\) in the form \(\overline\omega_{x,y} \colon (\lollihoml{y}{f_\ast(x)}) \to f_\ast\bigl( \lollihoml{f^\ast(y)}{x}\bigr)\) and write \(\overline\omega\) as a natural transformation \({\leftlolli} \circ (\id \times f_\ast) \nattrans f_\ast \circ {\leftlolli} \circ (f^\ast \times \id)\), so we represent it in surface diagrams as follows.
%If we have a natural transformation with components of the form \(\internalhom{y}{f_\ast(x)} \to f_\ast\bigl( \internalhom{f^\ast(y)}{x}\bigr)\), then to make it easier we will change the notation of internal hom from \(\internalhom{x}{x'}\) to the lollipop notation \(\) so that the components become \(\lollihoml{y}{f_\ast(x)} \to f_\ast\bigl( \lollihoml{f^\ast(y)}{x}\bigr)\) and thus we can write it as a natural transformation \({\leftlolli} \circ (\id \times f_\ast) \nattrans f_\ast \circ {\leftlolli} \circ (f^\ast \times \id)\) and it is represented in surface diagrams as follows.
\[
    \raisebox{-.45\height}{\input{\figdir/omega_bar.pstex_t}}
\]

\subsection{Closed monoidal categories and extranatural transformations}
In order to get the abstraction we want for Fausk, Hu and May's conjugate pairs, we will need to define the notion of closed monoidal category in a more formal fashion than is typical, i.e., avoiding referring to objects in categories, so avoiding referring to the components of evaluation and coevaluation maps.  This is what we do in Section~\ref{sec:closed-monoidal}.
The standard way to express the relationship between tensor product \({\otimes} \colon \cC \times \cC \to \cC\) and internal hom \({\leftlolli} \colon \cC^\op \times \cC \to \cC\) is to say that there are families of evaluation and coevaluation maps, \(\{\ev_{c, c'} \colon c \otimes (c \leftlolli c') \to c\}_{c,c'}\) and  \(\{\coev_{c, c'} \colon c  \to c' \leftlolli (c' \otimes c)\}_{c,c'}\), which satisfy triangle identities.  A sticking point is that these families do not form natural transformations: in each case, the domain and codomain functors do not have the same domain and codomain.  However, there is an alternative way of describing evaluation and coevaluation, which is as \define{extranatural transformations}; we will use '\(\extranat\)' to denote such things.  We can then write evaluation and coevaluation as \(\ev \colon {\otimes} \circ (\id \times {\leftlolli}) \extranat \id\) and \(\coev \colon \id \extranat {\leftlolli} \circ (\id \times {\otimes})\) and represent them with surface diagrams, so for instance we can represent the evaluation extranatural transformation as follows.
\[
    \raisebox{-.45\height}{\input{\figdir/ev_boxed.pstex_t}}
    \qquad
    \raisebox{-.45\height}{\begin{picture}(0,0)%
\includegraphics{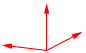}%
\end{picture}%
%
%  encoding: UTF-8 
%
\setlength{\unitlength}{4144sp}%
\begingroup\makeatletter\ifx\SetFigFont\undefined%
\gdef\SetFigFont#1#2{%
  \fontsize{#1}{#2pt}%
  \selectfont}%
\fi\endgroup%
\begin{picture}(640,384)(1085,-523)
\end{picture}%
}
\]
This is like how we drew a natural transformation, remembering that we are going from bottom to top, except that we are allowing the right-most face to not just consist of vertical lines but also of cups and caps.

Note that here we are beyond the world of monoidal \(2\)-categories, this does not represent a \(2\)-cell in the monoidal \(2\)-category of categories as it is not a natural transformation.  So in order to define things that follow we need extra structure for the monoidal \(2\)-category of categories.  This is not an essential point for the story here, but is of more general interest.  One way to think of these extranatural transformations, following Street, is as coming from the monoidal bicategory of profunctors where every object -- every category -- has a dual -- its opposite.  The surface diagrams can be interpreted as representing \(2\)-cells in the monoidal equipment (or double category) of functors and profunctors.  This is explained in  Appendix~\ref{app:double-category}.

We can then use this structure to define, in Section~\ref{sec:adjunctions-of-two-variables}, (left-sided) adjunctions of two-variables which are more-or-less the same as parametrized adjoints.  The data consists of two functors \(T \colon \cA \times \cB \to \cC\) and \(H \colon \cA^{\op} \times \cC \to \cB\) -- think tensor and hom -- together with two extranatural transformations, the counit and unit,
\[
    \epsilon \colon T \circ ({\id_\cA} \times H) \extranat \id_\cC
    \quad\text{and}\quad
    \eta \colon \id_\cB \extranat H \circ ({\id_{\cA^\op}} \times T).
\]
These satisfy triangle identities such as the following.
\begin{align*}
    \raisebox{-.45\height}{\input{\figdir/triangle_1_lhs.pstex_t}}
    ~&=~
    \raisebox{-.45\height}{\input{\figdir/triangle_1_rhs.pstex_t}}
\end{align*}
We write \(T\leftsidedadj H\).

If \(\cA\) is the terminal category then this recovers the usual notion of adjunction.  Also new adjunctions of two variables can be created by composing adjunctions of two variables with specific functors and ordinary adjunctions.  A certain case is spelled out in Section~\ref{subsec:new-adjunctions-from-old}.

\subsection{Conjugation for adjunctions of two variables}

Conjugation can be defined for adjunctions of two variables in the sense that if \(T\leftsidedadj H\) and $T' \leftsidedadj H'$ are two adjunctions of two variables and \(\theta \colon T \nattrans T'\) is a natural transformation then there is a conjugate natural transformation \(\phi \colon H' \nattrans H\) which is defined in the following way, analogous to that in the ordinary adjunction case.
\[
    \phi \coloneq \raisebox{-.45\height}{\input{\figdir/two_var_conjugate_forward.pstex_t}}
\]
Using this framework we can then, in Section~\ref{sec:examples}, easily identifty all of the `conjugate pairs' of Fausk, Hu and May as being conjugate in this sense.  They also have a notion of `conjugate triple' and the approach here, where we don't assume symmetry, makes it clear that this comes from two conjugate pairs which can be combined when symmetry is present. 

\subsection{A double-categorical context}

In Appendix~\ref{app:double-category}, following Street~\cite{StreetFunctorialCalculus} and others, we delve further into some categorical context for this work. 
Doing ordinary category theory with categories, functors and natural transformations can be thought of as working in the \(2\)-category of categories, \(\CAT\), and various aspects can be translated to an arbitrary bicategory, \(\bicat{C}\).  For instance, adjunctions can be defined in an arbitrary bicategory.  Translating other aspects of category theory requires more structure on the bicategory \(\bicat{C}\); for instance, when thinking about monoidal categories, you need \(\bicat{C}\) to be a \emph{monoidal} bicategory, and then a monoidal category translates as a pseudo-monoid in \(\bicat{C}\).  You can then ask what structure on \(\bicat{C}\) is needed then in order to translate a closed monoidal category or, indeed, an adjunction of two variables.

The \emph{microcosm principle} of Baez and Dolan~\cite{BaezDolan:HDA-III} suggests that you might think about defining a ``closed pseudo-monoid'' in a monoidal bicategory \(\bicat{C}\) when \(\bicat{C}\) is in some sense a ``closed monoidal bicategory''.  However, it is not clear that the \(2\)-category of categories \(\CAT\) is closed monoidal in an appropriate sense, and that is not the approach taken here.  Rather, \(\CAT\) has an ``extension'', the monoidal bicategory of profunctors \(\PROF\) which is closed monoidal in the sense that it has duals, and we view extranatural transformations in \(\CAT\) as shadows cast by this structure in \(\PROF\).

So one should be able to define ``extraordinary \(2\)-cells'' in a monoidal bicategory \(\bicat{C}\) if it has an ``extension'' \(\bicat{P}\) which is a monoidal bicategory with duals.  Here an ``extension'' means that \(\bicat{C}\) and \(\bicat{P}\) form an equipment (in particular a double category) in the same way that \(\CAT\) and \(\PROF\) do.  In the appendix we interpret our surface diagrams as representing \(2\)-cells in the monoidal double category of functors and profunctors. In the setting of \(\bicat{C}\) forming an equipment with such a \(\bicat{P}\) it would be possible to define adjunctions of two variables and closed  pseudo-monoids in \(\bicat{C}\) as we have done for \(\CAT\) here.  I am not aware that the details of a ``monoidal equipment with duals in one direction'' have yet been fully worked out.

\subsection{Acknowledgements}
Thanks to Christos Aravanis for many conversations.  
Commutative diagrams were prepared with quiver~\cite{Arkor:quiver} and typeset with tikzcd.  Surface diagrams were drawn in Xfig~\cite{xfig}.

%\enlargethispage*{4em}
\section{Classical adjunctions and conjugation}
\label{sec:adjunctions-and-conjugation}
This section is a warm-up.  We recall the operation of conjugation in the context of pairs of adjunctions and in particular how it is represented string diagrammatically.  The point is that once this is understood in the right fashion, the generalization in Section~\ref{sec:adjunctions-of-two-variables} will seem straightforward.  A classical treatment of conjugation is given in MacLane's book~\cite[Chapter~IV.7]{MacLaneCategoriesWork}.

First we recall the convention for string diagrams.  If  \(F\colon  \cC \rightleftarrows \cD \cocolon U\) forms an adjunction then we denote the unit \(\eta \colon \id \nattrans U \circ F\) and the counit \(\epsilon \colon F \circ U \nattrans \id\) as follows, where diagrams are read right to left and bottom to top.
\[
    \eta\equiv{\raisebox{-.45\height}{\input{\figdir/adj_unit.pstex_t}}};
    \quad
    \epsilon\equiv\raisebox{-.45\height}{\input{\figdir/adj_counit.pstex_t}}.
\]
The zig-zag identities, or triangle identities, are represented as follows:
\[
    \raisebox{-.45\height}{\input{\figdir/zig_zag_1a.pstex_t}} = \raisebox{-.45\height}{\begin{picture}(0,0)%
\includegraphics{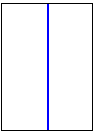}%
\end{picture}%
\setlength{\unitlength}{4144sp}%
\begingroup\makeatletter\ifx\SetFigFont\undefined%
\gdef\SetFigFont#1#2{%
  \fontsize{#1}{#2pt}%
  \selectfont}%
\fi\endgroup%
\begin{picture}(717,1013)(5353,-2871)
\put(5782,-2798){\makebox(0,0)[lb]{\smash{{\SetFigFont{8}{9.6}{\color[rgb]{0,0,0}$U$}%
}}}}
\end{picture}%
}\, ;
    \qquad
    \raisebox{-.45\height}{\begin{picture}(0,0)%
\includegraphics{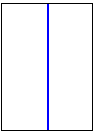}%
\end{picture}%
\setlength{\unitlength}{4144sp}%
\begingroup\makeatletter\ifx\SetFigFont\undefined%
\gdef\SetFigFont#1#2{%
  \fontsize{#1}{#2pt}%
  \selectfont}%
\fi\endgroup%
\begin{picture}(718,1014)(5353,-2872)
\put(5782,-2799){\makebox(0,0)[lb]{\smash{{\SetFigFont{8}{9.6}{\color[rgb]{0,0,0}$F$}%
}}}}
\end{picture}%
} = \raisebox{-.45\height}{\input{\figdir/zig_zag_2b.pstex_t}}\, .
\]

There are two categories which we can take as `the category of adjunctions' and conjugation provides an isomorphism between them.  Conjugation is part of the more general mates correspondence which provides an isomorphism between two double categories of adjunctions -- see the work of Kelly and Street~\cite{KellyStreetReview2Cats} -- but we will just use the conjugation aspect (this can be thought of as restricting to the horizontal categories of the double categories).  This conjugation will be lifted to the context of two-variable adjunctions later.

Given two adjunctions, we can define the conjugation operation between sets of natural transformations.
\begin{definition}
    \label{defn:ordinary_conjugation}
    For adjunctions \(F \dashv U\) and \(F' \dashv U'\) we can define a pair of maps
    \[
        \begin{tikzcd}[ampersand replacement=\&, column sep=tiny]
            {\Nat(F,F')} \&  \& {\Nat(U', U).}
            \arrow["\jL", shift left, curve={height=-1pt}, from=1-1, to=1-3]
            \arrow["{\jR}", shift left, curve={height=-1pt}, from=1-3, to=1-1]
        \end{tikzcd}
    \]
    We define \(\jL\) in the following way.  In terms of components for \(\theta \in \Nat(F, F')\) and \(d \in \cD\), we take \(\jL(\theta)_d \colon U'(d) \to U(d)\) to be the composite
    \[
         U'(d) \xrightarrow{\eta_{U'(d)}} U\circ F \circ U'(d)
         \xrightarrow{U\circ \theta \circ U'} U\circ F' \circ U'(d)
         \xrightarrow{U\circ \epsilon_{d}} U(d).
    \]
    In terms of natural transformations \(\jL(\phi)\in \Nat(U', U)\) is the composite
    \[
        U' \xRightarrow{\eta \circ U'}
        U\circ F \circ U'
        \xRightarrow{U\circ \theta \circ U'}
        U \circ F' \circ U'
        \xRightarrow{U\circ \epsilon'}
        U.
    \]
    So in diagrammatic notation, we can define \(\jL\) as follows.
    \[
        \raisebox{-.45\height}{\input{\figdir/classic_mates_1.pstex_t}}
        \mapsto~
        \raisebox{-.45\height}{\input{\figdir/classic_mates_2.pstex_t}}~.
    \]
    We define the other map \(\jR\) analogously so that for \(\phi\in \Nat(U', U)\) the conjugate \(\jR(\phi)\) is the composite
    \[
        F \xRightarrow{F \circ \eta'}
        F\circ U' \circ F'
        \xRightarrow{F\circ \phi \circ F'}
        F \circ U \circ F'
        \xRightarrow{ \epsilon \circ F'}
        F'.
    \]
    So in diagrammatic notation, we can define \(\jR\) as follows.
    \[
        \raisebox{-.45\height}{\input{\figdir/classic_mates_4.pstex_t}}
        \mapsfrom~
        \raisebox{-.45\height}{\input{\figdir/classic_mates_3.pstex_t}}~.
    \]
\end{definition}
We will give proof that the the maps defined above are inverse to each other, as the generalized two-variable version in Section~\ref{subsec:general-conjucation-two-variables} will be the same.
\begin{prop}
    \label{prop:ordinary-conjugation-inverse}
    The two maps defined above are mutually inverse.
\end{prop}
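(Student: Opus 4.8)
The plan is to verify $\jR\circ\jL=\id_{\Nat(F,F')}$ directly and to obtain $\jL\circ\jR=\id_{\Nat(U',U)}$ from the evident left--right symmetry of the two definitions, where reflecting the defining diagrams top-to-bottom swaps units with counits and turns $\jL$ into $\jR$. So fix $\theta\in\Nat(F,F')$ and stack the two defining diagrams: substituting $\jL(\theta)$ into the formula for $\jR$ and whiskering yields
\[
    \jR(\jL(\theta)) = (\epsilon\circ F')\odot(F\circ U\circ\epsilon'\circ F')\odot(F\circ U\circ\theta\circ U'\circ F')\odot(F\circ\eta\circ U'\circ F')\odot(F\circ\eta').
\]
Diagrammatically this is a single string diagram in which the bead $\theta$ sits surrounded by two cup--cap pairs: one from the unit $\eta$ and counit $\epsilon$ of $F\dashv U$, the other from $\eta'$ and $\epsilon'$ of $F'\dashv U'$.

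The heart of the argument is to recognise these as two independent zig-zags and pull each one straight. First I would apply the interchange law (naturality of the counit $\epsilon$) twice, to slide $\epsilon$ downward past $\epsilon'$ and then past $\theta$; this rewrites the unprimed data as the whiskered triangle $(\epsilon\circ F)\odot(F\circ\eta)$, which the first zig-zag identity for $F\dashv U$ collapses to an identity. What remains is $(\epsilon'\circ F')\odot(\theta\circ U'\circ F')\odot(F\circ\eta')$. A further use of interchange (naturality of $\theta$ against $\eta'$) slides the bead $\theta$ below the $\eta'$-cup, exposing the primed triangle $(\epsilon'\circ F')\odot(F'\circ\eta')$, and the first zig-zag identity for $F'\dashv U'$ collapses this to $\id_{F'}$. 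This leaves exactly $\theta$, so $\jR\circ\jL=\id$.

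The step I expect to be the main obstacle is the interchange bookkeeping: in component or linear notation one must track carefully which functor each of $\eta,\epsilon,\eta',\epsilon',\theta$ is whiskered by, and it is easy to misplace a factor. This is precisely where the diagrammatic calculus earns its keep, since the two sliding steps are nothing but planar isotopies of the strings — moving a bead and a cap through regions they do not touch — so the only genuine content is the two triangle identities, one per adjunction. Once $\jR\circ\jL=\id$ is established this way, the mirror-image diagram gives $\jL\circ\jR=\id$ with no new work, and this is the same computation that will reappear, essentially unchanged, for the two-variable adjunctions in Section~\ref{subsec:general-conjucation-two-variables}.
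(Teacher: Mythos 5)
Your proposal is correct and follows essentially the same route as the paper: the paper's proof is exactly the string-diagram computation you describe, pulling the two zig-zags straight one at a time via interchange/naturality and one triangle identity per adjunction, and it likewise proves only one composite explicitly, leaving the other to symmetry.
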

\begin{proof} We just show one way round.
    \[
        \jR\circ \jL(\theta)
        ~=~
        \raisebox{-.45\height}{\begin{picture}(0,0)%
\includegraphics{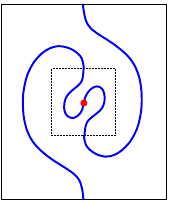}%
\end{picture}%
%
%  encoding: UTF-8 
%
\setlength{\unitlength}{4144sp}%
\begingroup\makeatletter\ifx\SetFigFont\undefined%
\gdef\SetFigFont#1#2{%
  \fontsize{#1}{#2pt}%
  \selectfont}%
\fi\endgroup%
\begin{picture}(1283,1533)(4986,-3414)
\put(5603,-2704){\makebox(0,0)[rb]{\smash{{\SetFigFont{8}{9.6}{\color[rgb]{0,0,0}$\theta$}%
}}}}
\end{picture}%
}
        ~=~
        \raisebox{-.45\height}{\begin{picture}(0,0)%
\includegraphics{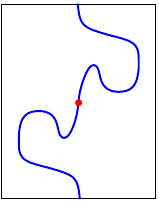}%
\end{picture}%
%
%  encoding: UTF-8 
%
\setlength{\unitlength}{4144sp}%
\begingroup\makeatletter\ifx\SetFigFont\undefined%
\gdef\SetFigFont#1#2{%
  \fontsize{#1}{#2pt}%
  \selectfont}%
\fi\endgroup%
\begin{picture}(1197,1535)(5026,-3416)
\end{picture}%
}
        ~=~
        \raisebox{-.45\height}{\begin{picture}(0,0)%
\includegraphics{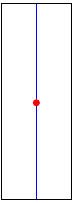}%
\end{picture}%
\setlength{\unitlength}{4144sp}%
\begingroup\makeatletter\ifx\SetFigFont\undefined%
\gdef\SetFigFont#1#2{%
  \fontsize{#1}{#2pt}%
  \selectfont}%
\fi\endgroup%
\begin{picture}(554,1535)(5348,-3416)
\end{picture}%
}
        ~=~
        \theta
    \]
\end{proof}

\begin{definition}
    \begin{enumerate}
        \item The \define{left category of adjunctions} \(\AdjL\) is the category where
        \begin{itemize}
            \item the objects are adjunctions \(F\colon  \cC \rightleftarrows \cD \cocolon U\), which we will also write as \(F \dashv U\) if we don't need to emphasise the domains and codomains,
            \item the morphisms are the natural transformations between the \emph{left} adjoint functors,
            \[
                \AdjL(F\dashv U, F'\dashv U') \coloneq \Nat(F, F') .
            \]
        \end{itemize}
        \item The \define{right category of adjunctions} \(\AdjR\) is the category where
        \begin{itemize}
            \item the objects are adjunctions \(F\colon  \cC \rightleftarrows \cD \cocolon U\), as in \(\AdjL\) above,
            \item the morphisms are the \emph{reversed} natural transformations between the \emph{right} adjoint functors,
            \[
                \AdjR(F\dashv U, F'\dashv U') \coloneq \Nat(U', U), 
            \]
            composition of morphisms is reversed composition of the natural transformations.
        \end{itemize}
    \end{enumerate}
\end{definition}

\begin{prop}
    \label{prop:conjugation-properties}
    The conjugation maps defined above form an identity-on-objects isomorphism of categories
    \[
        \begin{tikzcd}[ampersand replacement=\&, column sep=tiny]
            {\AdjL} \& \cong \& {\AdjR.}
            \arrow["\jL", shift left, curve={height=-2pt}, from=1-1, to=1-3]
            \arrow["{\jR}", shift left, curve={height=-2pt}, from=1-3, to=1-1]
        \end{tikzcd}
    \]
\end{prop}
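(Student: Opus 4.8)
The plan is to lean on the work already done in Proposition~\ref{prop:ordinary-conjugation-inverse}. That result says exactly that, for each pair of objects, \(\jL\) and \(\jR\) are mutually inverse bijections between the corresponding hom-sets; and by construction both send an adjunction to itself, so both are the identity on objects. The first point I would make is that this already reduces the proposition to a single functoriality check: a functor that is the identity on objects and a bijection on every hom-set is an isomorphism of categories, whose inverse is given by the inverse bijections, which are then automatically functorial. Hence it suffices to prove that \(\jL\) preserves identities and composition; functoriality of \(\jR\) then comes for free.

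Preservation of identities is immediate from a triangle identity. Setting \(\theta = \id_F\) deletes the \(\theta\)-box from the defining diagram, leaving \(\jL(\id_F) = (U\circ\epsilon)\odot(\eta\circ U)\); this zig-zag straightens to \(\id_U\), which is the identity morphism on \(F\dashv U\) in \(\AdjR\).

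The real content is preservation of composition. Since composition in \(\AdjR\) is reversed, for \(\theta\colon F\nattrans F'\) and \(\theta'\colon F'\nattrans F''\) what must be shown is
\[
    \jL(\theta'\circ\theta) = \jL(\theta)\odot\jL(\theta').
\]
I would argue this with string diagrams by stacking the diagram for \(\jL(\theta)\) above that for \(\jL(\theta')\) and then performing three moves. After a planar isotopy pulling the unit \(\eta\) of \(F\dashv U\) down to the foot of the diagram, the lower portion is recognisably the functor \(F\) whiskering the conjugate \(\phi'\coloneq\jL(\theta')\), so an \(F\circ\phi'\) appears. Next I slide the \(\theta\)-box past it using naturality of \(\theta\), rewriting \((\theta\circ U')\odot(F\circ\phi')\) as \((F'\circ\phi')\odot(\theta\circ U'')\). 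Finally---the only genuinely adjunction-theoretic step---the unit and counit of the \emph{intermediate} adjunction \(F'\dashv U'\) meet in a zig-zag \((\epsilon'\circ F')\odot(F'\circ\eta')\), which a triangle identity erases; what is left is precisely the defining diagram of \(\jL(\theta'\circ\theta)\). Equivalently one can isolate the ``mate square'' \(\epsilon'\odot(F'\circ\phi') = \epsilon''\odot(\theta'\circ U'')\)---itself just that triangle identity applied to the definition of \(\phi'\)---and combine it with naturality of \(\theta\).

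I expect this last step to be the main obstacle, but the difficulty is bookkeeping rather than conceptual: the stacked picture carries several whiskered copies of the units and counits of three different adjunctions, and the care lies in getting the isotopy, the single naturality slide, and the one zig-zag cancellation to line up geometrically. It is worth doing cleanly here because, as noted in Section~\ref{subsec:general-conjucation-two-variables}, the two-variable version of the argument will be formally the same, so the diagram moves established now carry over unchanged.
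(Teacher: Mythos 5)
Your proposal is correct and follows essentially the same route as the paper: deduce the isomorphism part from Proposition~\ref{prop:ordinary-conjugation-inverse}, note that identity preservation is immediate from a zig-zag identity, and establish preservation of the (reversed) composition in \(\AdjR\) by exactly the string-diagram manipulation you describe — an isotopy, a naturality slide of \(\theta\), and cancellation of the zig-zag of the intermediate adjunction \(F'\dashv U'\). Your extra remark that functoriality of \(\jR\) then comes for free (since the inverse of an identity-on-objects functorial bijection on hom-sets is automatically functorial) is a small streamlining that the paper leaves implicit.
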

\begin{proof}
    We just need to show that \(\jL\) and \(\jR\) are functorial, the isomorphism part follows from the previous proposition.  It is clear from the zig-zag relations that they preserve identity morphisms, so it remains that they preserve composition.  We do this for \(\jL\), noting that the composition in the right-adjoint category \(\AdjR\) is reversed.
    \[
        \jL(\theta') \odot \jL(\theta)
        ~=~
        \raisebox{-.45\height}{\input{\figdir/conjugate_reverse_composition_1.pstex_t}}
        ~=~
        \raisebox{-.45\height}{\input{\figdir/conjugate_reverse_composition_2.pstex_t}}
        ~=~
        \raisebox{-.45\height}{\input{\figdir/conjugate_reverse_composition_3.pstex_t}}
        ~=~
        \jL(\theta \odot \theta')
    \]
\end{proof}
The following is an immmediate, almost trivial corollary, but it will be key later on.
\begin{cor}
    Given a pair of adjunctions \(F \dashv U\) and \(F' \dashv U'\) we have that
    \begin{gather*}
        F \xRightarrow{\theta} F'
        \quad\text{has inverse}\quad 
        F' \xRightarrow{\theta^{-1}} F\\
        \intertext{if and only if}        
        U' \xRightarrow{\jL(\theta)} U
        \quad\text{has inverse}\quad
        U \xRightarrow{\jL(\theta^{-1})} U'.
    \end{gather*}
\end{cor}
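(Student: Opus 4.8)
The plan is to read this off directly from Proposition~\ref{prop:conjugation-properties}, which tells us that $\jL$ is an isomorphism of categories $\AdjL \cong \AdjR$ (with inverse $\jR$). The only conceptual point is to recognise that invertibility of the two natural transformations in the statement is exactly invertibility of the corresponding \emph{morphisms} in these categories; once that translation is made, the corollary is nothing more than the standard fact that a functor sends isomorphisms to isomorphisms and inverses to inverses.

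First I would note that a morphism of $\AdjL(F \dashv U, F' \dashv U') = \Nat(F, F')$ is an isomorphism precisely when $\theta \colon F \nattrans F'$ is an invertible natural transformation, since composition in $\AdjL$ is ordinary vertical composition; its inverse in $\AdjL$ is then $\theta^{-1}$. Dually, a morphism of $\AdjR(F \dashv U, F' \dashv U') = \Nat(U', U)$ is an isomorphism precisely when the underlying natural transformation $U' \nattrans U$ is invertible: composition in $\AdjR$ is reversed, but whether a natural transformation admits a two-sided inverse under vertical composition is insensitive to the order in which one reads that composition, so the reversal is harmless.

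Next, applying $\jL$ to the two identities $\theta \odot \theta^{-1} = \id_F$ and $\theta^{-1} \odot \theta = \id_{F'}$, and using both that $\jL$ preserves identities and the composition rule $\jL(\alpha \odot \beta) = \jL(\beta) \odot \jL(\alpha)$ established in the proof of Proposition~\ref{prop:conjugation-properties}, I would obtain
\[
    \jL(\theta^{-1}) \odot \jL(\theta) = \id_U
    \quad\text{and}\quad
    \jL(\theta) \odot \jL(\theta^{-1}) = \id_{U'},
\]
which exhibit $\jL(\theta) \colon U' \nattrans U$ and $\jL(\theta^{-1}) \colon U \nattrans U'$ as mutually inverse. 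This gives the forward implication, and the converse follows in exactly the same way by applying the inverse functor $\jR$ from Proposition~\ref{prop:ordinary-conjugation-inverse} (equivalently, because an isomorphism of categories also reflects isomorphisms).

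I do not expect any genuine obstacle: the statement is, as advertised, almost trivial once Proposition~\ref{prop:conjugation-properties} is available. The one place that rewards a little care is the bookkeeping around the reversed composition in $\AdjR$, since one must check that ``isomorphism in $\AdjR$'' coincides with ``invertible natural transformation $U' \nattrans U$''; but this is immediate from the observation that invertibility does not depend on reading composition forwards or backwards.
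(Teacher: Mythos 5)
Your proposal is correct and matches the paper's intent exactly: the corollary is stated there without proof as an ``immediate, almost trivial'' consequence of Proposition~\ref{prop:conjugation-properties}, and your argument---applying the identity-preserving, composition-reversing functor \(\jL\) to the two identities \(\theta\odot\theta^{-1}=\id_F\) and \(\theta^{-1}\odot\theta=\id_{F'}\), with the converse via \(\jR\)---is precisely the intended unpacking. Your remark that invertibility in \(\AdjR\) coincides with invertibility of the underlying natural transformation despite the reversed composition is the right point to check, and it is handled correctly.
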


\section{Closed monoidal categories and extranatural transformations}
\label{sec:closed-monoidal}
This section is about the reformulation of the definition of closed monoidal category in terms of extranatural transformations.  This gives an object-free, or formal, approach which goes hand-in-hand with a surface diagrammatic representation.

\subsection{Closed monoidal categories}
\label{subsec:closed-monoidal-cats}

Here we will examine some standard definitions of closed monoidal category, setting the scene for Eilenberg and Kelly's notion of extranatural transformation which we introduce in the next subsection, and then a reformulation of the definition of closed monoidal category in Section~\ref{subsec:closed-mon-via-extranat}.  

A standard way~\cite{nlab:closed_monoidal_category} to define a closed monoidal category is as follows.
\begin{definition}
    \label{defn:closed-mon-cat-low-brow}
    A \define{left closed monoidal category} consists of a monoidal category $(\cC, \otimes, \one)$ together with, for each $c \in \cC$, a specified functor $[c, {-}]\colon \cC \to \cC$ and an adjunction $c\otimes {-} \dashv [c, {-}]$. The  counit and unit of each of these adjunctions are known as evaluation and coevaluation, respectively, with components are as follows:
    \[
        \ev_{c,\secondc} \colon c \otimes [c, \secondc] \to \secondc,
        \quad
        \coev_{c,\secondc} \colon \secondc \to [c, c \otimes \secondc].
    \]
    The object \([c, \secondc]\) is the \define{internal hom} from \(c\) to \(\secondc\).
\end{definition}
Two fundamental examples are, on one hand, the monoidal category $(\Set, \times, \{\star\})$ of sets equipped with the cartesian product as monoidal structure and the internal hom between two sets being the set of functions between the two sets, and, on the other hand, the monoidal category $(\Vect, \otimes, \C)$ of complex vector spaces with the internal hom being the set of linear functions equipped with the pointwise vector space structure.

The word `left' appears in the above, one obtains the `right' version by replacing $c\otimes {-}$ with ${-}\otimes c$.  In the context of symmetric monoidal categories then the left and right internal homs are canonically isomorphic, otherwise, however, if we are using both left and right versions then we should distinguish them for instance by using $\internalhoml{{-}}{{-}}$ and $\internalhomr{{-}}{{-}}$.  In some contexts, as we will see, it is convenient to use `infix' notation such as the `lollipop', so we write $\lollihoml{c}{\secondc} \equiv \internalhoml{c}{\secondc}$ and $\lollihomr{c}{\secondc} \equiv \internalhomr{c}{\secondc}$ with adjunctions $(c\otimes{-}) \dashv (\lollihoml{c}{{-}})$ and $({-} \otimes c) \dashv (\lollihomr{c}{-})$. % In other areas, such as algebraic geometry, `script hom' $\sHom_\mathrm{L}(c, \secondc)$ and `hom underline' $\underline{\Hom}_\mathrm{L}(c, \secondc)$ are also used, with the $\mathrm{L}$ or $\mathrm{R}$ superscript removed in the symmetric monoidal case.

If it helps to keep a non-symmetric example in mind, for any group \(\Gamma\) there is the monoidal category $\cC_\Gamma$ where the objects are the elements of \(\Gamma\), where the monoidal product is the product in the group and where left and right internal homs are given as follows: $\internalhoml{g}{h} = g^{-1}h$ and $\internalhomr{g}{h} = hg^{-1}$.

It is generally well-known that from the definition above one can canonically combine the family of functors $\bigl\{[c, {-}]\colon \cC\to \cC\bigr\}_c$ into a single functor $[{-}, {-}] \colon \cC^\op \times \cC \to \cC$.  We will go into the details.  
Begin by recalling that the evaluation and coevaluation morphisms are both natural in $Y$ and satisfy the following two triangle identities.
\begin{equation}
    % https://q.uiver.app/#q=WzAsMyxbMCwwLCJbWCxZXSJdLFsxLDAsIlxcYmlnbFtYLFhcXG90aW1lcyBbWCxZXVxcYmlncl0iXSxbMSwxLCJbWCwgWV0iXSxbMSwyLCJbWCxcXGV2X3tYLFl9XSJdLFswLDEsIlxcY29ldl97WCwgW1gsWV19Il0sWzAsMiwiXFxpZCIsMl1d
\begin{tikzcd}[ampersand replacement=\&,column sep=large]
	{[c,\secondc]} \& {\bigl[c,c\otimes [c,\secondc]\bigr]} \\
	\& {[c, \secondc]}
	\arrow["{[c,\ev_{c,\secondc}]}", from=1-2, to=2-2]
	\arrow["{\coev_{c, [c,\secondc]}}", from=1-1, to=1-2]
	\arrow["\id"', from=1-1, to=2-2]
\end{tikzcd}
    \quad
    % https://q.uiver.app/#q=WzAsMyxbMCwwLCJYIFxcb3RpbWVzIFkiXSxbMSwwLCJYXFxvdGltZXNbWCwgWFxcb3RpbWVzIFldIl0sWzEsMSwiWCBcXG90aW1lcyBZIl0sWzEsMiwiXFxldl97WCxYXFxvdGltZXMgWX0iXSxbMCwxLCJYXFxvdGltZXMgXFxjb2V2X3tYLCBZfSJdLFswLDIsIlxcaWQiLDJdXQ==
\begin{tikzcd}[ampersand replacement=\&,column sep=large]
	{c \otimes \secondc} \& {c\otimes[c, c\otimes \secondc]} \\
	\& {c \otimes \secondc}
	\arrow["{\ev_{c,c\otimes \secondc}}", from=1-2, to=2-2]
	\arrow["{c\otimes \coev_{c, \secondc}}", from=1-1, to=1-2]
	\arrow["\id"', from=1-1, to=2-2]
    \tag{$*$}
    \label{diag:triangle-ids}
\end{tikzcd}
\end{equation}
Then for $f\colon c\to c'$, define $[f, \secondc]\colon [c', \secondc] \to [c, \secondc]$ to be the lower composite of the following diagram, so that the diagram commutes by definition.
\[
% https://q.uiver.app/#q=WzAsNCxbMCwwLCJbWCcsWV0iXSxbMSwwLCJbWCxZXSJdLFswLDEsIlxcYmlnbFtYLCBYXFxvdGltZXMgW1gnLFldXFxiaWdyXSJdLFsxLDEsIlxcYmlnbFtYLCBYJ1xcb3RpbWVzIFtYJyxZXVxcYmlncl0iXSxbMCwyLCJcXGNvZXZfe1gsW1gnLFldfSIsMl0sWzIsMywiW1gsIGZcXG90aW1lcyBbWCcsWV1dIl0sWzAsMSwiW2YsIFldIl0sWzMsMSwiW1gsIFxcZXZfe1gnLFl9XSIsMl1d
\begin{tikzcd}[ampersand replacement=\&,column sep=large]
	{[c',\secondc]} \& {[c,\secondc]} \\
	{\bigl[c, c\otimes [c',\secondc]\bigr]} \& {\bigl[c, c'\otimes [c',\secondc]\bigr]}
	\arrow["{\coev_{c,[c',\secondc]}}"', from=1-1, to=2-1]
	\arrow["{[c, f\otimes [c',\secondc]]}", from=2-1, to=2-2]
	\arrow["{[f, \secondc]}", from=1-1, to=1-2]
	\arrow["{[c, \ev_{c',\secondc}]}"', from=2-2, to=1-2]
\end{tikzcd}
\tag{$**$}
\label{diag:functor-defn}
\]
(Another way to say it is that \([f, {-}]\) is defined as the conjugate of \(f \otimes {-} \).)
This gives the requisite \define{left internal hom functor} $[{-}, {-}] \colon \cC^\op \times \cC \to \cC$.

Despite being natural in $\secondc$, the families of maps $\{\coev_{c,\secondc}\}_{c,\secondc}$ and $\{\ev_{c,\secondc}\}_{c,\secondc}$ do not form natural transformations, indeed the domains and codomains do not match up properly, with a repeated variable of $c$ on one side.  In fact these families satisfy what is known as extranaturality, introduced by Eilenberg and Kelly~\cite{EilenbergKelly}, we will consider this concept in more generality in the next subsection.  For completeness, the proofs of the next two theorems are in Appendix~\ref{app:proofs}.

\begin{thm}
    \label{thm:extranatural-coev-ev}
    In a left closed monoidal category %-- as per Definition~\ref{defn:closed-mon-cat-low-brow} -- 
    the evaluation and coevaluation morphisms are `extranatural'  in the first parameter, $c$, in the sense that for any morphism $f\colon c\to c'$ the following diagrams commute. 
    \[
        % https://q.uiver.app/#q=WzAsNCxbMSwxLCJZIl0sWzAsMSwiWFxcb3RpbWVzIFtYLCBZXSJdLFswLDAsIlhcXG90aW1lcyBbIFgnLCBZXSJdLFsxLDAsIlgnXFxvdGltZXMgW1gnLCBZXSJdLFsxLDIsIlggXFxvdGltZXMgW2YsICBZXSIsMCx7InN0eWxlIjp7InRhaWwiOnsibmFtZSI6ImFycm93aGVhZCJ9LCJoZWFkIjp7Im5hbWUiOiJub25lIn19fV0sWzMsMiwiIGZcXG90aW1lcyBbWCcsWV0iLDIseyJzdHlsZSI6eyJ0YWlsIjp7Im5hbWUiOiJhcnJvd2hlYWQifSwiaGVhZCI6eyJuYW1lIjoibm9uZSJ9fX1dLFswLDMsIlxcZXZfe1gnLCBZfSIsMix7InN0eWxlIjp7InRhaWwiOnsibmFtZSI6ImFycm93aGVhZCJ9LCJoZWFkIjp7Im5hbWUiOiJub25lIn19fV0sWzAsMSwiXFxldl97WCxZfSIsMCx7InN0eWxlIjp7InRhaWwiOnsibmFtZSI6ImFycm93aGVhZCJ9LCJoZWFkIjp7Im5hbWUiOiJub25lIn19fV1d
\begin{tikzcd}[ampersand replacement=\&]
	{c\otimes [ c', \secondc]} \& {c'\otimes [c', \secondc]} \\
	{c\otimes [c, \secondc]} \& \secondc
	\arrow["{c \otimes [f,  \secondc]}", tail reversed, no head, from=2-1, to=1-1]
	\arrow["{ f\otimes [c',\secondc]}"', tail reversed, no head, from=1-2, to=1-1]
	\arrow["{\ev_{c', \secondc}}"', tail reversed, no head, from=2-2, to=1-2]
	\arrow["{\ev_{c,\secondc}}", tail reversed, no head, from=2-2, to=2-1]
\end{tikzcd}
        \quad
        % https://q.uiver.app/#q=WzAsNCxbMCwwLCJZIl0sWzEsMCwiW1gnLCBYJ1xcb3RpbWVzIFldIl0sWzEsMSwiW1gsIFgnXFxvdGltZXMgWV0iXSxbMCwxLCJbWCwgWFxcb3RpbWVzIFldIl0sWzEsMiwiW2YsIFgnXFxvdGltZXMgWV0iXSxbMywyLCJbWCwgZlxcb3RpbWVzIFldIiwyXSxbMCwzLCJcXGNvZXZfe1gsIFl9IiwyXSxbMCwxLCJcXGNvZXZfe1gnLFl9Il1d
\begin{tikzcd}[ampersand replacement=\&,column sep=large]
	\secondc \& {[c', c'\otimes \secondc]} \\
	{[c, c\otimes \secondc]} \& {[c, c'\otimes \secondc]}
	\arrow["{[f, c'\otimes \secondc]}", from=1-2, to=2-2]
	\arrow["{[c, f\otimes \secondc]}"', from=2-1, to=2-2]
	\arrow["{\coev_{c, \secondc}}"', from=1-1, to=2-1]
	\arrow["{\coev_{c',\secondc}}", from=1-1, to=1-2]
\end{tikzcd}
    \]
\end{thm}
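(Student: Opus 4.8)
The plan is to run everything off the explicit definition \eqref{diag:functor-defn} of the functorial action $[f,\secondc]$, together with three standing facts: that $\ev_{c,-}$ and $\coev_{c,-}$ are respectively the counit and unit of the adjunction $c\otimes{-}\dashv[c,{-}]$, and hence natural in their second slot; that $\otimes$ and $[{-},{-}]$ are functors; and the triangle identities \eqref{diag:triangle-ids}. Conceptually, since $[f,{-}]=\jL(f\otimes{-})$ is the conjugate of the natural transformation $f\otimes{-}$, the two squares are exactly the general conjugation identities $\epsilon\odot(F\circ\phi)=\epsilon'\odot(\theta\circ U')$ and $(U\circ\theta)\odot\eta=(\phi\circ F')\odot\eta'$ transcribed into the present notation, so either computation below can equally be read as a single planar move in the string-diagram calculus of Section~\ref{sec:adjunctions-and-conjugation}.

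For the first diagram (evaluation), I would substitute the lower composite of \eqref{diag:functor-defn} for $[f,\secondc]$, distribute the functor $c\otimes{-}$ over the result, and then apply naturality of the counit $\ev_{c,-}$ in its second argument twice to push $\ev_{c,\secondc}$ leftwards:
\[
    \ev_{c,\secondc}\circ\bigl(c\otimes[f,\secondc]\bigr)
    =\ev_{c',\secondc}\circ(f\otimes[c',\secondc])\circ
    \ev_{c,\,c\otimes[c',\secondc]}\circ\bigl(c\otimes\coev_{c,[c',\secondc]}\bigr).
\]
The first naturality step turns $\ev_{c,\secondc}\circ(c\otimes[c,\ev_{c',\secondc}])$ into $\ev_{c',\secondc}\circ\ev_{c,\,c'\otimes[c',\secondc]}$, and the second turns $\ev_{c,\,c'\otimes[c',\secondc]}\circ(c\otimes[c,f\otimes[c',\secondc]])$ into $(f\otimes[c',\secondc])\circ\ev_{c,\,c\otimes[c',\secondc]}$. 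The trailing factor $\ev_{c,\,c\otimes[c',\secondc]}\circ(c\otimes\coev_{c,[c',\secondc]})$ is precisely the second triangle identity of \eqref{diag:triangle-ids} with $\secondc$ instantiated as $[c',\secondc]$, hence the identity; what survives is $\ev_{c',\secondc}\circ(f\otimes[c',\secondc])$, which is the asserted commutativity.

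For the second diagram (coevaluation), both composites are arrows $\secondc\to[c,c'\otimes\secondc]$, so I would transpose across $c\otimes{-}\dashv[c,{-}]$, i.e. postcompose $c\otimes({-})$ with $\ev_{c,\,c'\otimes\secondc}$, and check the two transposes agree. The transpose of $[c,f\otimes\secondc]\circ\coev_{c,\secondc}$ collapses to $f\otimes\secondc$ by one application of naturality of $\ev_{c,-}$ followed by the second triangle identity. The transpose of $[f,c'\otimes\secondc]\circ\coev_{c',\secondc}$ first invokes the evaluation extranaturality just established (with $\secondc$ instantiated as $c'\otimes\secondc$) to rewrite $\ev_{c,\,c'\otimes\secondc}\circ(c\otimes[f,c'\otimes\secondc])$ as $\ev_{c',\,c'\otimes\secondc}\circ(f\otimes[c',c'\otimes\secondc])$, then slides $f$ past $\coev_{c',\secondc}$ using the interchange law for $\otimes$, and finishes with the second triangle identity for $c'$; this also equals $f\otimes\secondc$. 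As transposition across the adjunction is a bijection, the two original arrows coincide. A self-contained alternative is the evident dual of the evaluation computation, expanding $[f,{-}]$ via \eqref{diag:functor-defn} and using instead naturality of the unit $\coev_{c,-}$ and the first triangle identity.

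I do not expect a genuinely hard step: every manipulation is either a naturality slide or a zig-zag. The only real difficulty is bookkeeping, since each map carries a repeated $c$; one must instantiate the triangle identities and naturality squares at the correct object (typically $\secondc$ replaced by $[c',\secondc]$ or by $c'\otimes\secondc$) and keep straight which of the two adjunctions, the one for $c$ or the one for $c'$, supplies each structure map. Rendering the argument in the surface diagrams used elsewhere in the paper makes this bookkeeping visually automatic, which is exactly why the diagrammatic reformulation is worthwhile.
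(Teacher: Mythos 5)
Your argument is correct and is essentially the paper's own proof: the paper's Figure~\ref{fig:digram-ev-coev} is exactly your equation chain for the first square rendered as a pasting diagram (expand $[f,\secondc]$ via \eqref{diag:functor-defn}, slide $\ev_{c,-}$ through two naturality squares, kill the remainder with a triangle identity), and for the second square the paper simply invokes the dual diagram chase, which you list as your self-contained alternative. Your primary route for the coevaluation square — transposing across $c\otimes{-}\dashv[c,{-}]$ and comparing adjuncts — is a harmless variation that trades the dual computation for injectivity of transposition plus the already-established evaluation extranaturality, and both it and your instantiations of the triangle identities check out.
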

Now we can give  an equivalent definition of a left closed monoidal category.
\begin{thm}
    \label{thm:closed-mon-cat-med-brow}
    A left closed monoidal category in the sense of Definitions~\ref{defn:closed-mon-cat-low-brow} can equivalently be specified in the following way: a monoidal category $(\cC, \otimes, \one)$ with a functor $[{-}, {-}]\colon \cC^\op \times \cC \to \cC$ and, for all $c,\secondc\in \cC$, morphisms 
    \[
        \ev_{c,\secondc} \colon c \otimes [c, \secondc] \to \secondc
        \quad
        \text{and}
        \quad
        \coev_{c,\secondc} \colon \secondc \to [c, c \otimes \secondc]
    \]
    which are natural in $\secondc$ and extranatural in $c$ (in the sense of Theorem~\ref{thm:extranatural-coev-ev}) and which satisfy the triangle identities of~(\ref{diag:triangle-ids}) above.%, thus give an adjunction $c\otimes {-} \dashv [c,{-}]$ for each $c\in \cC$.
\end{thm}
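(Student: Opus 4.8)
The plan is to prove the equivalence by exhibiting mutually inverse passages between the two collections of data on a fixed monoidal category $(\cC,\otimes,\one)$, with nearly all of the work already carried out in the material preceding the statement. The two directions are essentially bookkeeping; the only genuine content is a short diagram chase, powered by extranaturality, that makes the correspondence a bijection.

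First I would treat the passage from Definition~\ref{defn:closed-mon-cat-low-brow} to the data of the theorem. The internal hom bifunctor $[{-},{-}]\colon\cC^\op\times\cC\to\cC$ is exactly the one built in the discussion around diagram~(\ref{diag:functor-defn}), where $[f,\secondc]$ is declared to be the conjugate of $f\otimes{-}$ and functoriality is verified. Naturality of $\ev$ and $\coev$ in the second variable, and the triangle identities~(\ref{diag:triangle-ids}), are immediate, since for each fixed $c$ these are precisely the counit--unit data and the triangle identities of the adjunction $c\otimes{-}\dashv[c,{-}]$. Extranaturality in the first variable is the content of Theorem~\ref{thm:extranatural-coev-ev}. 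Assembling these produces the reformulated data.

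Next I would treat the reverse passage. Given a bifunctor $[{-},{-}]$ together with $\ev$ and $\coev$ as in the statement, I fix $c\in\cC$ and restrict to obtain $[c,{-}]\colon\cC\to\cC$. Because $\ev_{c,\secondc}$ and $\coev_{c,\secondc}$ are natural in $\secondc$, they assemble into natural transformations $\ev_{c,{-}}\colon(c\otimes{-})\circ[c,{-}]\nattrans\id$ and $\coev_{c,{-}}\colon\id\nattrans[c,{-}]\circ(c\otimes{-})$, the candidate counit and unit. The two diagrams of~(\ref{diag:triangle-ids}) are exactly the two triangle identities for the adjunction $c\otimes{-}\dashv[c,{-}]$, so each $c$ yields such an adjunction, recovering Definition~\ref{defn:closed-mon-cat-low-brow}.

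Finally I would check that the two passages are mutually inverse. Starting from Definition~\ref{defn:closed-mon-cat-low-brow} and returning, the functors $[c,{-}]$ and their adjunctions are visibly unchanged, since neither the second-variable action nor $\ev,\coev$ is disturbed. The one real check, and the main obstacle, is the other round trip: that the first-variable action $[f,\secondc]$ of a \emph{given} reformulated bifunctor agrees with the one re-manufactured via~(\ref{diag:functor-defn}) from the associated adjunctions. This is where extranaturality is indispensable; without it the first-variable action would be unconstrained and the reformulated data would carry strictly more information. I expect it to follow from a short chase: starting from the re-manufactured composite $[c,\ev_{c',\secondc}]\circ[c,f\otimes[c',\secondc]]\circ\coev_{c,[c',\secondc]}$, apply the $\coev$-extranaturality square of Theorem~\ref{thm:extranatural-coev-ev} (at the object $[c',\secondc]$) to rewrite $[c,f\otimes[c',\secondc]]\circ\coev_{c,[c',\secondc]}$ as $[f,c'\otimes[c',\secondc]]\circ\coev_{c',[c',\secondc]}$; then use bifunctoriality of $[{-},{-}]$ to interchange $[c,\ev_{c',\secondc}]\circ[f,c'\otimes[c',\secondc]]=[f,\secondc]\circ[c',\ev_{c',\secondc}]$; and finally collapse $[c',\ev_{c',\secondc}]\circ\coev_{c',[c',\secondc]}=\id$ by the first triangle identity of~(\ref{diag:triangle-ids}), leaving the given $[f,\secondc]$. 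Equivalently, this is the involutivity of conjugation (Proposition~\ref{prop:ordinary-conjugation-inverse}) applied to $f\otimes{-}$.
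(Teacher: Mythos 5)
Your proposal is correct and follows essentially the same route as the paper: the forward direction is the construction around diagram~(\ref{diag:functor-defn}) together with Theorem~\ref{thm:extranatural-coev-ev}, and the decisive round-trip check is exactly the paper's verification that (\ref{diag:functor-defn}) commutes for the reformulated data. Your three-step chase (the $\coev$-extranaturality square, bifunctoriality of $[{-},{-}]$, and the first triangle identity) is precisely the decomposition the paper displays in Figure~\ref{fig:diagram-closed-monoidal}.
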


In order to reformulate the definition of monoidal category from Theorem~\ref{thm:closed-mon-cat-med-brow} into the form we want it (Theorem~\ref{thm:closed-mon-cat-high-brow}), we need to look at the general concept of extranatural transformation.

\subsection{Extranatural transformations}
\label{subsec:extranatural-transformations}
In Theorem~\ref{thm:extranatural-coev-ev} we saw that evaluation and coevaluation satisfied a condition which we referred to as being extranatural.  In this subsection we will consider the general notion of extranatural transformation first defined by Eilenberg and Kelly~\cite{EilenbergKelly}.  We will give the diagrammatic representation and give some basic properties.  Extranatural transformations will be fundamental for the definition we want of closed monoidal category and, more generally, of adjunctions of two variables.  This section is a mild updating of material from my work of Hopf monads~\cite{Willerton:HopfMonads} where I used the term `dinatural' instead.

Let's begin with the definition.
\begin{definition}
    \label{defn:extranaturality}
    Suppose
    \[
        P\colon \cC\times \cC^\op\times \cA\to \cD
        \quad\text{and}\quad
        Q\colon \cA\times \cB^\op\times \cB \to \cD
    \]
    are two functors.  
    %(More generally, the domain of each functor can have its factors permuted.)  
    An \define{extranatural transformation} 
    \(\beta\colon P \extranat Q\) is a family of morphisms
    \(\beta_{c,a,b} \colon P(c,c,a)\to Q(a,b,b)\) which satisfies
    the following extranaturality condition.  If $f\colon c\to c'$, $h\colon a \to a'$ and $g\colon b\to b'$  are morphisms in, respectively, \(\cC\), \(\cA\) and \(\cB\) then the diagram below commutes.
    \[
        % https://q.uiver.app/#q=WzAsOCxbMCwwLCJQKGMsIGMnLCBhKSJdLFsxLDAsIlAoYycsYycsYSkiXSxbMCwxLCJQKGMsIGMsIGEpIl0sWzEsMSwiUShhLCBiLCBiKSJdLFsyLDAsIlAoYycsIGMnLCBhJykiXSxbMiwxLCJRKGEnLCBiLCBiKSJdLFszLDAsIlEoYScsIGInLCBiJykiXSxbMywxLCJRKGEnLCBiLCBiJykiXSxbMCwxLCJQKGYsIGMnLCBhKSJdLFswLDIsIlAoYywgZiwgYSkiLDJdLFsxLDMsIlxcYmV0YV97YycsIGEsIGJ9IiwyXSxbMiwzLCJcXGJldGFfe2MsYSxifSIsMl0sWzQsNSwiXFxiZXRhX3tjJywgYScsIGJ9Il0sWzEsNCwiUChjJyxjJywgaCkiXSxbMyw1LCJRKGgsYixiKSIsMl0sWzQsNiwiXFxiZXRhX3tjJywgYScsIGInfSJdLFs1LDcsIlEoYScsIGIsIGcpIiwyXSxbNiw3LCJRKGEnLCBnLCBiJykiXV0=
\begin{tikzcd}[ampersand replacement=\&]
	{P(c, c', a)} \& {P(c',c',a)} \& {P(c', c', a')} \& {Q(a', b', b')} \\
	{P(c, c, a)} \& {Q(a, b, b)} \& {Q(a', b, b)} \& {Q(a', b, b')}
	\arrow["{P(f, c', a)}", from=1-1, to=1-2]
	\arrow["{P(c, f, a)}"', from=1-1, to=2-1]
	\arrow["{P(c',c', h)}", from=1-2, to=1-3]
	\arrow["{\beta_{c', a, b}}"', from=1-2, to=2-2]
	\arrow["{\beta_{c', a', b'}}", from=1-3, to=1-4]
	\arrow["{\beta_{c', a', b}}", from=1-3, to=2-3]
	\arrow["{Q(a', g, b')}", from=1-4, to=2-4]
	\arrow["{\beta_{c,a,b}}"', from=2-1, to=2-2]
	\arrow["{Q(h,b,b)}"', from=2-2, to=2-3]
	\arrow["{Q(a', b, g)}"', from=2-3, to=2-4]
\end{tikzcd}
    \]
\end{definition}

The first square expresses extranaturality in $c$, the second square expresses ordinary naturality in $a$ and the final square expresses extranaturality in $b$.  In Appendix~\ref{app:double-category}, Theorem~\ref{thm:extranatural-as-natural} gives an alternative definition of extranatural transformation as a natural transformation between profunctors.

Each of the categories  \(\cC\), \(\cA\), \(\cB\) and \(\cD\) can, of course, be products of
other categories and these can be permuted in the definition.  A usual natural transformation is the case where
\(\cC = \cB = \termcat\).

\begin{ex}
    For $\cC$ a left closed monoidal category as per Definition~\ref{defn:closed-mon-cat-low-brow}, if we take $P\colon \cC \times \cC^\op \times \cC \to \cC$ to be given by $P(c, c', a) \coloneq c \otimes [c', a]$ and $Q\colon \cC \to \cC$ to be the identity functor, then Theorem~\ref{thm:extranatural-coev-ev} says that the evaluation maps form an extranatural transformation $\ev\colon P \extranat Q$.   
    Similarly, Theorem~\ref{thm:extranatural-coev-ev} also says that the coevaluation maps form an extranatural transformation, $\coev$.  
\end{ex}

A key point is that we can represent extranatural transformations diagrammatically.  If we have functors
\[
    P\colon \cC\times \cC^\op\times \cA\to \cD
    \quad\text{and}\quad
    Q\colon \cA\times \cB^{\op}\times \cB \to \cD
\]
then an extranatural transformation \(\beta\colon
P\extranat Q\) can be denoted as below, where we are using the conventions from Section~\ref{subsec:surface-diagrams} so the diagram is read bottom-to-top, right-to-left and front-to-back, and any part of the surface diagram that represents the opposite $\cC^\op$ of a category $\cC$ is shaded, giving the sense of two sides to the surface.  
\[
   \raisebox{-.45\height}{\input{\figdir/extranatural_sample.pstex_t}}
\]
This is not a natural transformation; it is not a \(2\)-cell the monoidal \(2\)-category of categories.  However, in  Appendix~\ref{app:double-category} we see it can be viewed as a \(2\)-cell in the monoidal double category of profunctors.

We can horizontally compose extranatural transformations with functors, as we do ordinary natural transformations, where it is also called `whiskering'.  Suppose that \(F\colon \tilde \cA\to \cA\),  \(G\colon \tilde \cC\to
\cC\),  \(H\colon \tilde \cB\to \cB\), and  \(K\colon \cD\to
\tilde\cD\) are functors  and that \(\beta \colon P\extranat Q\) is an
extranatural transformation of the above form then we get an extranatural
transformation
 \[\tilde \beta\colon K\circ P\circ (G\times G^\op\times F)\extranat
     K\circ Q\circ (F\times H^\op\times H),\]
given by
\(\tilde\beta_{\tilde c, \tilde a, \tilde b}:=K\bigl(\beta_{G(\tilde c),
  F(\tilde a), H(\tilde b)}\bigr)\).
This is denoted graphically as follows.
 \[\raisebox{-.45\height}{\input{\figdir/whiskered_extranat_sample.pstex_t}}\]

We can also compose extranatural transformations with ordinary natural transformations.
Suppose that we have
%natural transformations
%  \[\pstex{H3_4_2},\quad\pstex{H3_4_3},\quad\pstex{H3_4_4}\quad
%     \text{and}\quad\pstex{H3_4_5},\]
functors 
%\(F, F' \colon \tilde \cA\to \cA\); \(G, G' \colon \tilde \cC\to \cC\); \(H,
%H' \colon \tilde \cD\to \cD\); and \(K, K' \colon \cB\to
%\tilde\cB\) 
\[
    F, F' \colon \tilde\cA \to \cA,\quad 
    G, G' \colon \tilde\cC \to \cC, \quad 
    H, H' \colon \tilde\cB \to \cB,\quad
    K, K' \colon \cD \to \tilde\cD,
\] 
and natural transformations
% \(\phi \colon F \nattrans F'\); \(\gamma \colon G\nattrans G'\); \(\eta\colon H \nattrans H'\); and \(\kappa\colon
% K \nattrans K'\),
\[
    \phi \colon F \nattrans F',\quad 
    \gamma \colon G\nattrans G', \quad
    \theta\colon H' \nattrans H, \quad
    \kappa\colon K \nattrans K',
\]
together with
an extranatural transformation
\(\beta \colon P\extranat Q\)
 of the above form
% \[\pichere\]
then we have the following composite extranatural
transformations and the equality between them.  The pictures should demonstrate a key feature of the diagrammatic notation.
 \[
    \raisebox{-.45\height}{\input{\figdir/nt_composed_extranat_1.pstex_t}}\quad
    =
    \quad
    \raisebox{-.45\height}{\input{\figdir/nt_composed_extranat_2.pstex_t}}.
\]
% For instance, the extranatural transformation on the left hand side has the form $K\circ P \circ (G\times G' \times F) \extranat K'\circ Q \circ (F' \times H' \times H)$ and has component, for $\tilde c \in \tilde \cC$, $\tilde a \in \tilde \cA$, and $\tilde b \in \tilde \cB$, given by the following:
% \begin{align*}
%     K(P(G(\tilde c), G'(\tilde c), F(\tilde a))) 
%     &\xrightarrow{\kappa_{P(G(\tilde c), G'(\tilde c), F(\tilde a))}}
%     K'(P(G(\tilde c), G'(\tilde c), F(\tilde a)))\\ 
%     &\xrightarrow{K'(P(\gamma_{\tilde c}, \id, \phi_{\tilde a}))}
%     K'(P(G'(\tilde c), G'(\tilde c), F'(\tilde a)))\\
%     &\xrightarrow{K'(\beta_{G'(\tilde c), F'(\tilde a), H'(\tilde b)})}
%     K'(Q(F'(\tilde a), H'(\tilde b), H'(\tilde b)))\\
%     &\xrightarrow{K'(P(\id, \id, \theta_{\tilde b}))}
%     K'(Q(F'(\tilde a), H'(\tilde b), H
%     (\tilde b)))
% \end{align*}
The equality follows from the naturality of $\kappa$ together with the extranaturality of $\beta$ as expressed by the commutativity of the diagram in Definition~\ref{defn:extranaturality}.  %Hopefully, this picture makes clear some of the appropriateness of the diagrammatic notation.

We can also vertically compose two extranatural transformations to obtain another extranatural transformation provided a certain condition pointed out by 
Eilenberg and Kelly~\cite{EilenbergKelly} is satisfied.  The right-hand profile of the surface we use to represent an extranatural transformation is the so-called \define{Eilenberg-Kelly graph} of the extranatural
transformation, consisting of arcs with the end-points of an arc
labelled by the same category.  We can define a composite extranatural transformation provided the composite Eilenberg-Kelly graph has no loops.  This is probably best illustrated with some examples.  Suppose we have
functors
\[
    P\colon \cC\times \cC^\op\times \cA\to \cD,\quad
    R\colon \cA\times \cA^\op\times \cA\to \cD, \quad 
    Q\colon \cA\times \cB\times \cB^\op \to \cD
\]
together with extranatural transformations \(\beta \colon P\extranat R\) and
\(\beta' \colon R\extranat Q\) such that $\beta$ is extranatural in the last two factors of the domain of $R$.  We can form the composite extranatural transformation provided that $\beta'$ is extranatural in the \emph{first} two factors of the domain of $R$ but not if $\beta'$ is extranatural in the \emph{last} two factors of the domain of $R$.  This is illustrated in Figure~\ref{fig:eilenberg-kelly-graph}.  The left-hand picture gives an extranatural transformation, but the right-hand picture does not.  However, using the ideas of  Appendix~\ref{app:double-category}, the right-hand side picture does represent a natural transformation between profunctors.
\begin{figure}[tbh]
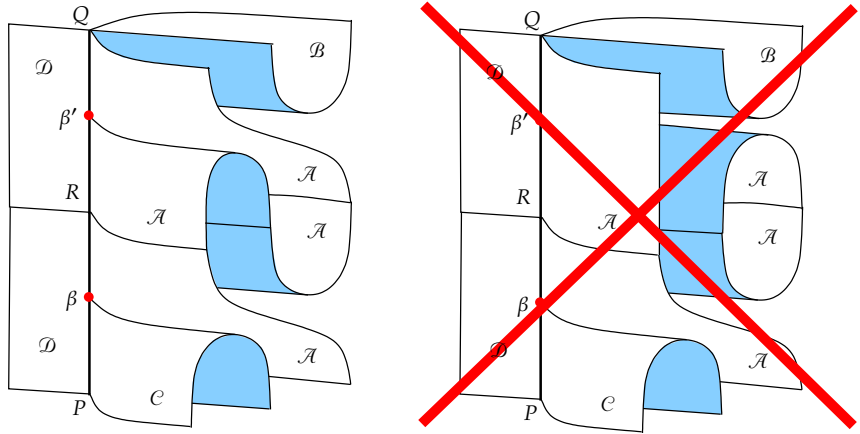

\[
    \raisebox{-.45\height}{\input{\figdir/extranatural_vertical_composite.pstex_t}}
    \qquad
    \raisebox{-.45\height}{\input{\figdir/extranatural_vertical_non_composite.pstex_t}}
\]
    \caption{Example and non-example of composition of extranatural transformations}
    \label{fig:eilenberg-kelly-graph}
\end{figure}

\subsection{Closed monoidal categories via extranatural transformations}
\label{subsec:closed-mon-via-extranat}
We can now give the final reformulation of the definition of closed monoidal category.  This is amenable to representation in the diagrammatic framework.  We will see later that this is an example of a reformulation of the notion of an adjunction of two variables.  This will be a useful perspective for understanding key results of Fausk, Hu and May, below.

Here it will be convenient to switch to the lollipop notation $c\leftlolli c'$ for the internal hom which we called $\internalhoml{c}{c'}$ above.

\begin{thm}
    \label{thm:closed-mon-cat-high-brow}
    A {left closed monoidal category} can equivalently defined as a monoidal category $(\cC, \otimes, \one)$ with a functor ${\leftlolli} \colon \cC^\op \times \cC \to \cC$ and two extranatural transformations (natural in the third variable in both cases),
    \[
        \ev \colon {\otimes} \circ ({\id} \times {\leftlolli}) \extranat {\id}
        \quad
        \text{and}
        \quad
        \coev \colon {\id} \extranat {\leftlolli} \circ ({\id} \times {\otimes}),
    \]
    satisfying the triangle identities in the sense that the following diagrams commute.
    \begin{gather*}
            % https://q.uiver.app/#q=WzAsMyxbMCwwLCJcXGxlZnRsb2xsaSAiXSxbMSwxLCJcXGxlZnRsb2xsaSJdLFsxLDAsIntcXGxlZnRsb2xsaX1cXGNpcmMgKHtcXGlkfSBcXHRpbWVzIHtcXG90aW1lc30pIFxcY2lyYyAoe1xcaWR9IFxcdGltZXMge1xcaWR9IFxcdGltZXMge1xcbGVmdGxvbGxpfSkiXSxbMCwxLCJcXGlkIiwyLHsibGV2ZWwiOjJ9XSxbMiwxLCJ7XFxsZWZ0bG9sbGl9IFxcY2lyYyAoXFxpZCBcXHRpbWVzIFxcZXYpIiwwLHsibGV2ZWwiOjIsInN0eWxlIjp7ImJvZHkiOnsibmFtZSI6ImJhcnJlZCJ9fX1dLFswLDIsIlxcY29ldiBcXGNpcmMge1xcbGVmdGxvbGxpfSIsMCx7ImxldmVsIjoyLCJzdHlsZSI6eyJib2R5Ijp7Im5hbWUiOiJiYXJyZWQifX19XV0=
        \begin{tikzcd}[ampersand replacement=\&,sep=2.25em]
            {\leftlolli } \& {{\leftlolli}\circ ({\id} \times {\otimes}) \circ ({\id} \times {\id} \times {\leftlolli})} \\
            \& \leftlolli
            \arrow["\id"', Rightarrow, from=1-1, to=2-2]
            \arrow["{{\leftlolli} \circ (\id \times \ev)}", "\shortmid"{marking},
            Rightarrow, from=1-2, to=2-2]
            \arrow["{\coev \circ {\leftlolli}}", "\shortmid"{marking}, Rightarrow, from=1-1, to=1-2]
        \end{tikzcd}
        \\
        % https://q.uiver.app/#q=WzAsMyxbMCwwLCJcXG90aW1lcyJdLFsxLDEsIlxcb3RpbWVzIl0sWzEsMCwie1xcb3RpbWVzfVxcY2lyYyAoe1xcaWR9IFxcdGltZXMge1xcbGVmdGxvbGxpfSkgXFxjaXJjICh7XFxpZH0gXFx0aW1lcyB7XFxpZH0gXFx0aW1lcyB7XFxvdGltZXN9KSJdLFswLDEsIlxcaWQiLDIseyJsZXZlbCI6Mn1dLFsyLDEsIntcXGV2fSBcXGNpcmMgKHtcXGlkfSBcXHRpbWVzIHtcXGlkfSBcXHRpbWVzIHtcXG90aW1lc30pIiwwLHsibGV2ZWwiOjIsInN0eWxlIjp7ImJvZHkiOnsibmFtZSI6ImJhcnJlZCJ9fX1dLFswLDIsIntcXG90aW1lc30gXFxjaXJjIHtcXGNvZXZ9ICIsMCx7ImxldmVsIjoyLCJzdHlsZSI6eyJib2R5Ijp7Im5hbWUiOiJiYXJyZWQifX19XV0=
        \begin{tikzcd}[ampersand replacement=\&,sep=2.25em]
            \otimes \& {{\otimes}\circ ({\id} \times {\leftlolli}) \circ ({\id} \times {\id} \times {\otimes})} \\
            \& \otimes
            \arrow["\id"', Rightarrow, from=1-1, to=2-2]
            \arrow["{{\ev} \circ ({\id} \times {\id} \times {\otimes})}", "\shortmid"{marking}, Rightarrow, from=1-2, to=2-2]
            \arrow["{{\otimes} \circ {\coev} }", "\shortmid"{marking}, Rightarrow, from=1-1, to=1-2]
        \end{tikzcd}
    \end{gather*}
\end{thm}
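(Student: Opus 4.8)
The plan is to use Theorem~\ref{thm:closed-mon-cat-med-brow} as a bridge, reducing the claim to showing that the ``high-brow'' data and axioms stated here are nothing more than a repackaging of the ``medium-brow'' ones; since Theorem~\ref{thm:closed-mon-cat-med-brow} already identifies the latter with Definition~\ref{defn:closed-mon-cat-low-brow}, this suffices. The whole proof should therefore be a matter of unwinding the definition of extranatural transformation and comparing components, with no genuinely new computation.

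First I would match the data. The composite $\otimes \circ (\id \times \leftlolli)$ is the functor $\cC \times \cC^\op \times \cC \to \cC$ sending $(c, c', \secondc)$ to $c \otimes (c' \leftlolli \secondc)$, so reading off Definition~\ref{defn:extranaturality} with $\secondc$ as the naturality variable and $c$ as the extranatural variable (and the remaining $\cB$-slot trivial), an extranatural transformation $\ev\colon \otimes \circ (\id \times \leftlolli) \extranat \id$ is exactly a family $\ev_{c,\secondc}\colon c \otimes (c \leftlolli \secondc) \to \secondc$ that is natural in $\secondc$ and extranatural in $c$; indeed the extranaturality square of Definition~\ref{defn:extranaturality} specialises precisely to the first square of Theorem~\ref{thm:extranatural-coev-ev}. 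Running the same identification with $\leftlolli \circ (\id \times \otimes)$ in the target slot, an extranatural transformation $\coev\colon \id \extranat \leftlolli \circ (\id \times \otimes)$ is exactly a family $\coev_{c,\secondc}\colon \secondc \to c \leftlolli (c \otimes \secondc)$ natural in $\secondc$ and extranatural in $c$, reproducing the second square of Theorem~\ref{thm:extranatural-coev-ev}. Hence the evaluation/coevaluation data of the two formulations coincide on the nose.

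It then remains to match the axioms, that is, to show that the two triangle-identity diagrams stated here hold if and only if the two component diagrams of~(\ref{diag:triangle-ids}) do. The key observation is that every arrow occurring in the triangle diagrams is obtained from $\ev$ or $\coev$ by whiskering with a functor, and whiskering an extranatural transformation by a functor is again extranatural with the evident component formula recalled in Section~\ref{subsec:extranatural-transformations}. I would then simply read off the $(c,\secondc)$-component of each composite. The first triangle evaluates to
\[
c \leftlolli \secondc
\xrightarrow{\coev_{c, c \leftlolli \secondc}}
c \leftlolli \bigl(c \otimes (c \leftlolli \secondc)\bigr)
\xrightarrow{c \leftlolli \ev_{c,\secondc}}
c \leftlolli \secondc,
\]
which is exactly the left-hand diagram of~(\ref{diag:triangle-ids}), while the second evaluates to
\[
c \otimes \secondc
\xrightarrow{c \otimes \coev_{c,\secondc}}
c \otimes \bigl(c \leftlolli (c \otimes \secondc)\bigr)
\xrightarrow{\ev_{c, c \otimes \secondc}}
c \otimes \secondc,
\]
the right-hand diagram of~(\ref{diag:triangle-ids}). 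Since two (extra)natural transformations with a common source and target agree exactly when their components do, the abstract triangle identities are equivalent to the componentwise ones.

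The step I expect to be the main obstacle is not any of these identifications individually but the bookkeeping that makes them legitimate: one must track the extranatural variables carefully and check that the vertical composites appearing in the two triangle diagrams are genuinely composable extranatural transformations, i.e.\ that their Eilenberg--Kelly graphs contain no loops in the sense of Section~\ref{subsec:extranatural-transformations}. It is precisely this composability (together with the identification of the free extranatural variable of $\coev$ with the corresponding variable of $\leftlolli$ or $\otimes$) that guarantees the composite is computed correctly by evaluating and composing on components. Granting this, combining the data identification with the axiom identification shows the present description to be equivalent to that of Theorem~\ref{thm:closed-mon-cat-med-brow}, hence to Definition~\ref{defn:closed-mon-cat-low-brow}.
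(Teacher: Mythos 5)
Your proposal is correct and matches the paper's (implicit) treatment: the paper gives no separate proof of this theorem, regarding it as a direct repackaging of Theorem~\ref{thm:closed-mon-cat-med-brow} obtained by unwinding Definition~\ref{defn:extranaturality} and reading off components, which is exactly what you do. Your extra care about composability of the Eilenberg--Kelly graphs is sound but unproblematic here, since the graphs in both triangle identities are loop-free zig-zags.
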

We represent the evaluation and coevaluation extranatural transformations as follows.
\[
    \raisebox{-.45\height}{\input{\figdir/ev.pstex_t}}
    \qquad
    \raisebox{-.45\height}{\input{\figdir/coev.pstex_t}}
\]
The triangle identities are then expressed in the following way.
\begin{align*}
    \raisebox{-.45\height}{\input{\figdir/triangle_1_lhs.pstex_t}}
    ~&=~
    \raisebox{-.45\height}{\input{\figdir/triangle_1_rhs.pstex_t}}\\
    \raisebox{-.45\height}{\input{\figdir/triangle_2_lhs.pstex_t}}
    ~&=~
    \raisebox{-.45\height}{\input{\figdir/triangle_2_rhs.pstex_t}}
\end{align*}
We will abstract the key parts of the above definition now, in to the notion of an adjunction of two variables which will allow us to handle the examples we are interested in.

\section{Adjunctions of two variables and conjugation}
\label{sec:adjunctions-of-two-variables}

This section is in some sense the heart of the paper.  We give an extranatural definition of adjunction of two variables which is a unit-counit reformulation in the formal setting of Kan's notion of a pair of adjoint functors in two variables. 
We see how conjugation, generalized from ordinary adjunctions, works in this setting, seeing analogues of the results in Section~\ref{sec:adjunctions-and-conjugation}.  We then look at conjugation for certain composite adjunctions of two variables, the construction here being key for the examples.

\subsection{Defintion of adjunctions of two variables}
The relationship between tensor and internal hom in Theorem~\ref{thm:closed-mon-cat-high-brow} above is a special case of the following.
\begin{definition}
    A \define{left-sided adjunction of two-variables} consists of two functors $T \colon \cA \times \cB \to \cC$ and $H \colon \cA^\op \times \cC \to \cB$ together with two extranatural transformations, the counit and the unit,
    \[
        \epsilon \colon T \circ ({\id_\cA} \times H) \extranat \id_\cC
        \quad\text{and}\quad
        \eta \colon \id_\cB \extranat H \circ ({\id_{\cA^\op}} \times T),
    \]
    satisfying the triangle identities in the sense that the following diagrams commute.
    \begin{gather*}
    \begin{tikzcd}[ampersand replacement=\&]
        {H} \& {{H}\circ ({\id} \times {T}) \circ ({\id} \times {\id} \times {H})} \\
        \& H
        \arrow["\id"', Rightarrow, from=1-1, to=2-2]
        \arrow["{{H} \circ ({\id} \times {\epsilon})}", "\shortmid"{marking}, Rightarrow, from=1-2, to=2-2]
        \arrow["{\eta \circ {H}}", "\shortmid"{marking}, Rightarrow, from=1-1, to=1-2]
    \end{tikzcd}\\
    \begin{tikzcd}[ampersand replacement=\&]
        T \& {{T}\circ ({\id} \times {H}) \circ ({\id} \times {\id} \times {T})} \\
        \& T
        \arrow["\id"', Rightarrow, from=1-1, to=2-2]
        \arrow["{{\epsilon} \circ ({\id} \times {\id} \times {T})}", "\shortmid"{marking}, Rightarrow, from=1-2, to=2-2]
        \arrow["{{T} \circ {\eta} }", "\shortmid"{marking}, Rightarrow, from=1-1, to=1-2]
    \end{tikzcd}
\end{gather*}
    Say that $T$ is the left adjoint and $H$ is the right adjoint and write $T\leftsidedadj H$.
\end{definition}
So a left closed monoidal category is a monoidal category $(\cC, \otimes, \one)$ with a functor $\leftlolli\colon \cC^\op \times \cC \to \cC$ and a left-sided adjunction ${\otimes} \leftsidedadj {\leftlolli}$.  Diagrammatically, we represent a left-sided adjunction in the same way we have represented a left closed monoidal category above, but of course the labels change on the pictures.

This definition of left-sided adjunction of two-variables is, up to permutation, a unit-counit reformulation of Kan's notion~\cite{KanAdjointFunctors} of `adjoint functors in two variables' (the term `adjunction with parameter' is used in Mac Lane's book~\cite{MacLaneCategoriesWork}).  In Kan's definition $T$ would have domain $\cB \times \cA$ rather than $\cA \times \cB$; in the symmetric monoidal $2$-category of categories this doesn't really matter, but in the diagrammatic world, our convention is sensible as we wish to avoid using symmetries without good reason.  Kan's convention makes sense when defining adjoint functors in two variable via a natural isomorphism $\Hom_\cC(T({-}, {-}), {-})\cong \Hom_\cB({-}, H({-}, {-}))$.

The definition of a \emph{right} closed monoidal category will clearly be a slight modification of the above.  A right closed monoidal category is a monoidal category $(\cC, \otimes, \one)$ with a functor $\rightlolli\colon \cC \times \cC^\op \to \cC$ and a right-sided adjunction of two variables ${\otimes} \rightsidedadj {\rightlolli}$, where a right-sided adjunction of two variables involves functors $T \colon \cA \times \cB \to \cC$ and $H \colon \cC \times \cB^\op \to \cA$ together with two extranatural transformations, the counit and the unit,
$
    \epsilon \colon T \circ (H \times {\id_\cB}) \extranat \id_\cC
$
and
$
    \eta \colon \id_\cA \extranat H \circ (T \times {\id_{\cB^\op}}),
$
satisfying appropriate triangle identities.  The diagrams representing the structure here are just the diagrams for the left-sided version but reflected in the plane of the paper or screen.

It is worth noting that in Hovey's book~\cite{HoveyModelCategories} an adjunction of two variables means a functor $T\colon \cA \times \cB \to \cC$ with \emph{both} left-sided and right-sided adjunctions of two variables $T\leftsidedadj H$ and $T\rightsidedadj H'$, so is modelling the situation where a monoidal category has both left and right internal homs.  Cheng, Gurski and Riehl~\cite{ChengGurskiRiehl} give a more symmetric definition of this notion of having both a left-sided and a right-sided adjunction of two variables.

\subsection{New adjunctions from old}  
\label{subsec:new-adjunctions-from-old}
We can compose ordinary adjunctions to obtain new adjunctions.  We can similarly compose left-sided adjunctions of two variables: we will not look at the fully general case, but rather look at the following special case that will give us the examples we are interested in.  
Suppose that we have a left-sided adjunction of two variables \(T \leftsidedadj H\) for the functors
\[ 
    T \colon \cA \times \cB \to \cC
    \quad\text{and}\quad
    H \colon \cA^\op \times \cC \to \cB, 
\]
drawn as 
\[
    \raisebox{-.45\height}{\input{\figdir/left_sided_left_adjoint.pstex_t}}
    \quad\text{and}\quad
    \raisebox{-.45\height}{\input{\figdir/left_sided_right_adjoint.pstex_t}}
\]
and that we also have a functor
\[ 
    K \colon \tilde A \to A
\]
together with a pair of ordinary adjunctions
\[
    F_1 \colon \cC \leftrightarrows \tilde \cC \cocolon U_1
    \quad\text{and}\quad
    F_2 \colon \tilde \cB \leftrightarrows \cB \cocolon U_2
\]
then we get another left-sided adjunction of two variables:
\[
    F_1 \circ T \circ (K \times F_2) \leftsidedadj 
    U_2 \circ H \circ (K^{\op} \times U_1).
\]
The left and right functors in this adjunction of two variables are denoted as follows:
\[
    \raisebox{-.45\height}{\input{\figdir/left_sided_left_adjoint_modified.pstex_t}}
    \quad\text{and}\quad
    \raisebox{-.45\height}{\input{\figdir/left_sided_right_adjoint_modified.pstex_t}}
    \, .
\] 
The unit and counit of the  adjunction of two variables are simple to write down diagrammatically and they are as follows:
\[
    \raisebox{-.45\height}{\input{\figdir/coev_modified.pstex_t}}
    \quad\text{and}\quad
    \raisebox{-.45\height}{\input{\figdir/ev_modified.pstex_t}}
\]
One can check diagrammatically, very easily, that these satisfy the zig-zag identities.

%\section{Conjugation for adjunctions of two variables}

\subsection{General conjugation for adjunctions of two variables}
\label{subsec:general-conjucation-two-variables}
For a pair of left-sided adjunctions of two variables we can define conjugation between sets of natural transformations analogously to how it is done for ordinary pairs of adjunctions as in Definition~\ref{defn:ordinary_conjugation}.
% \begin{definition}
%     For a pair of left-sided adjunctions of two variables, \(T \leftsidedadj H\) and \(T' \leftsidedadj H'\) we can define \define{conjugation}, this consists of a pair of maps between sets of natural transformations
%     \[
%         \begin{tikzcd}[ampersand replacement=\&, column sep=tiny]
%             {\Nat(T, T')} \&  \& {\Nat(H', H).}
%             \arrow["\jL", shift left, curve={height=-1pt}, from=1-1, to=1-3]
%             \arrow["{\jR}", shift left, curve={height=-1pt}, from=1-3, to=1-1]
%         \end{tikzcd}
%     \]
%     In diagrammatic notation, we can define \(\jL\) as follows: 
%     \[
%         \pstex{two_var_conjugate_forward_pre}
%         ~\mapsto~
%         \pstex{two_var_conjugate_forward}.
%     \]
%     Similarly, we can define \(\jR\) as follows:
%     \[
%         \pstex{two_var_conjugate_backward}
%         ~\mapsfrom~
%         \pstex{two_var_conjugate_backward_pre}.
%     \]
% \end{definition}
\begin{definition}
    \label{def:jL-jR-two-var}
    For a pair of left-sided adjunctions of two variables, \(T \leftsidedadj H\) and \(T' \leftsidedadj H'\) we can define \define{conjugation}, this consists of a pair of maps between sets of natural transformations
    \[
        \begin{tikzcd}[ampersand replacement=\&, column sep=tiny]
            {\Nat(T, T')} \&  \& {\Nat(H', H).}
            \arrow["\jL", shift left, curve={height=-1pt}, from=1-1, to=1-3]
            \arrow["{\jR}", shift left, curve={height=-1pt}, from=1-3, to=1-1]
        \end{tikzcd}
    \]
    Diagrammatically, \(\jL\) and \(\jR\) are defined as in Figure~\ref{fig:jL-jR-two-var}.
    \begin{figure}[tbh]
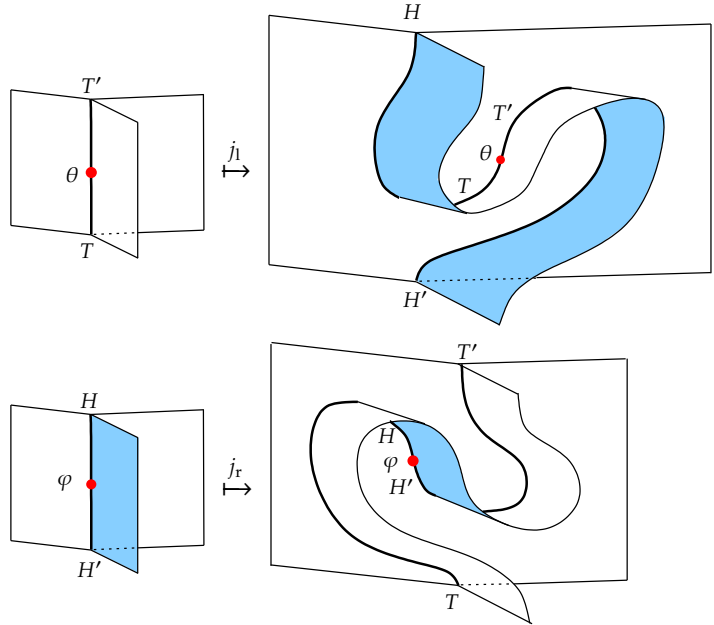

        \begin{align*}
            \raisebox{-.45\height}{\input{\figdir/two_var_conjugate_forward_pre.pstex_t}}
            ~&\xmapsto{\jL}~
            \raisebox{-.45\height}{\input{\figdir/two_var_conjugate_forward.pstex_t}}
            \\
            \raisebox{-.45\height}{\input{\figdir/two_var_conjugate_backward_pre.pstex_t}}
            ~&\xmapsto{\jR}~
            \raisebox{-.45\height}{\input{\figdir/two_var_conjugate_backward.pstex_t}}
            \end{align*}
        \caption{The functions \(\jL\) and \(\jR\) of Definition~\ref{def:jL-jR-two-var}}
        \label{fig:jL-jR-two-var}
    \end{figure}
\end{definition}
\begin{prop}
    The two conjugation maps defined above are mutually inverse.
\end{prop}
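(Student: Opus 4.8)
The plan is to follow the string-diagram proof of Proposition~\ref{prop:ordinary-conjugation-inverse} essentially verbatim, now reading the pictures as surface diagrams. As there, it suffices to check one composite, say $\jR \circ \jL = \id_{\Nat(T, T')}$; the verification that $\jL \circ \jR = \id_{\Nat(H', H)}$ is completely analogous, interchanging the roles of the two triangle identities and of the two adjunctions. This is exactly the reduction the text promises when it says that the proof of the ordinary case was given precisely because ``the generalized two-variable version \dots will be the same''.

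First I would fix $\theta \colon T \nattrans T'$ and form $\jL(\theta) \colon H' \nattrans H$ as in Figure~\ref{fig:jL-jR-two-var}, then apply $\jR$ to obtain $\jR(\jL(\theta)) \colon T \nattrans T'$. Drawing the composite surface diagram, one obtains a picture built from the unit $\eta$ and counit $\epsilon$ of $T \leftsidedadj H$, from the unit $\eta'$ and counit $\epsilon'$ of $T' \leftsidedadj H'$, and from a single copy of $\theta$. The cups and caps coming from $\eta, \epsilon, \eta', \epsilon'$ are arranged into two nested zig-zags, exactly as the two bends appearing in the ordinary case, with $\theta$ sitting in the middle between them.

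The computation is then two moves. First, I would use the interchange equality between natural and extranatural transformations displayed in Section~\ref{subsec:extranatural-transformations} (which follows from the naturality of $\theta$ together with the extranaturality of the units and counits) to slide $\theta$ clear of one pair of bends, thereby exposing a clean zig-zag of $\epsilon'$ against $\eta'$. Second, I would apply the triangle identities from the definition of a left-sided adjunction of two variables to straighten each zig-zag in turn. After straightening both, the only surviving $2$-cell is $\theta$, giving $\jR(\jL(\theta)) = \theta$, as required.

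The main obstacle is not the algebra but making sure that each diagrammatic move is legitimate in the three-dimensional, extranatural setting: one must check that sliding $\theta$ past the extranatural cups and caps keeps the Eilenberg--Kelly graph loop-free, so that every intermediate picture really denotes a well-defined extranatural (here in fact natural) transformation, and that the two triangle identities are applied to the correct faces of the surface. Granting the sliding and whiskering rules established in Section~\ref{subsec:extranatural-transformations}, these checks are routine topology of the surface diagram, mirroring the ordinary proof, and no genuinely new phenomena arise from the two-variable structure.
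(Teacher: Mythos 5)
Your proposal matches the paper's own proof, which simply notes that the proposition is proved diagrammatically in the same way as Proposition~\ref{prop:ordinary-conjugation-inverse}: draw the composite, slide $\theta$ past the bends using naturality/extranaturality, and straighten with the two triangle identities. Your additional care about the Eilenberg--Kelly graph remaining loop-free at each intermediate stage is a sensible check but does not change the argument.
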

\begin{proof}
    This is proved diagrammatically in the same way as Proposition~\ref{prop:ordinary-conjugation-inverse}.
\end{proof}
We can also analogous categories of adjunctions of two variables.
\begin{definition}
    There is a category \(\TwadjLL\), the \define{left category of left-side adjunctions of two variables}, where an object is a left-sided adjunction of two variables \(T \leftsidedadj H\) and the hom-sets are given by 
    \[
        \TwadjLL(T \leftsidedadj H, T' \leftsidedadj H') := \Nat(T, T').
    \]

    Similarly, there is a category \(\TwadjLR\), the \define{right category of left-side adjunctions of two variables}, where an object is a left-sided adjunction of two variables \(T \leftsidedadj H\) and the hom-sets are given by 
    \[
        \TwadjLR(T \leftsidedadj H, T' \leftsidedadj H') := \Nat(H', H).
    \]  
    Composition is given by reverse composition of natural transformations.
\end{definition}
\begin{prop}
    \label{prop:two-variable-conjugation-properties}
    The conjugation maps defined above form an identity-on-objects isomorphism of categories
    \[
        \begin{tikzcd}[ampersand replacement=\&, column sep=tiny]
            {\TwadjLL} \& \cong \& {\TwadjLR.}
            \arrow["\jL", shift left, curve={height=-2pt}, from=1-1, to=1-3]
            \arrow["{\jR}", shift left, curve={height=-2pt}, from=1-3, to=1-1]
        \end{tikzcd}
    \]
\end{prop}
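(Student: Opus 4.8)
The plan is to follow the proof of Proposition~\ref{prop:conjugation-properties} essentially verbatim, transcribing the ordinary string-diagram argument into the surface-diagram calculus of Section~\ref{subsec:surface-diagrams}. First I would observe that both $\TwadjLL$ and $\TwadjLR$ have the same objects, namely left-sided adjunctions of two variables, and that $\jL$ and $\jR$ are identity-on-objects by construction. Since the previous proposition already establishes that $\jL$ and $\jR$ are mutually inverse bijections on each hom-set $\Nat(T, T') \cong \Nat(H', H)$, it suffices to prove that $\jL$ (and hence, by the symmetric argument, $\jR$) is a functor: the ``$\cong$'' then follows immediately, exactly as in the ordinary case.

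For functoriality there are two things to check. Preservation of identities amounts to $\jL(\id_T) = \id_H$: setting $\theta = \id$ in Figure~\ref{fig:jL-jR-two-var} leaves a surface diagram built only from the whiskered unit and counit, which collapses to $\id_H$ by the first triangle identity of the two-variable adjunction $T \leftsidedadj H$. Preservation of composition requires care because, as in $\AdjR$, composition in $\TwadjLR$ is reversed; so for $\theta \colon T \nattrans T'$ and $\theta' \colon T' \nattrans T''$ I would show $\jL(\theta') \vcomp \jL(\theta) = \jL(\theta \vcomp \theta')$, mirroring the three-step diagrammatic chain in the proof of Proposition~\ref{prop:conjugation-properties}. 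Stacking the two conjugate diagrams produces a spare unit-counit pair between the $\theta$ and $\theta'$ slabs; a triangle identity of $T' \leftsidedadj H'$ cancels this zig-zag, after which the diagram is recognisably the conjugate of $\theta \vcomp \theta'$.

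The only genuinely new point relative to Section~\ref{sec:adjunctions-and-conjugation} is checking that these planar manipulations remain valid once the extranatural cups and caps (the right-hand faces of the unit and counit) and the ``into the page'' product direction are present. Concretely, I would verify that the unit-counit pair introduced in the composition step is cancelled by a bona fide two-variable triangle identity rather than merely an ordinary one, and that sliding $\theta$ past the extranatural parts is licensed by the interchange of natural with extranatural transformations recorded in Section~\ref{subsec:extranatural-transformations}, keeping the Eilenberg-Kelly graph loop-free throughout. Since the surface-diagram notation was set up precisely so that every move used in the ordinary proof has an unambiguous three-dimensional counterpart, I expect this verification to be routine, and the figures for the composition step to look identical to those of Proposition~\ref{prop:conjugation-properties} with the extra depth direction drawn in.
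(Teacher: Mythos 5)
Your proposal matches the paper's proof, which is given simply by the remark that the statement ``is proved diagrammatically in the same way as Proposition~\ref{prop:conjugation-properties}'': identity-on-objects and mutual inverses from the preceding proposition, identities preserved via the triangle identities, and composition preserved by cancelling the intermediate unit--counit zig-zag of $T'\leftsidedadj H'$ in the stacked surface diagrams. Your extra care about the extranatural cups and caps and the interchange with natural transformations is a reasonable elaboration of what the paper leaves implicit, but it is the same argument.
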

\begin{proof}
    This is proved diagrammatically in the same way as Proposition~\ref{prop:conjugation-properties}.
\end{proof}

The following is an immmediate, almost trivial corollary, but it will be key later on.
\begin{cor}
    \label{cor:two-conjugate-inverse}
    Given a pair of left-sided adjunctions of two variables, \(T \leftsidedadj H\) and \(T' \leftsidedadj H'\) we have that
    \begin{gather*}
        T \xRightarrow{\theta} T'
        \quad\text{has inverse}\quad 
        T' \xRightarrow{\theta^{-1}} T\\
        \intertext{if and only if}
        % \quad\Longleftrightarrow\quad\\
        H' \xRightarrow{\jL(\theta)} H
        \quad\text{has inverse}\quad
        H \xRightarrow{\jL(\theta^{-1})} H'.
    \end{gather*}
\end{cor}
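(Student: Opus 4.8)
The plan is to deduce this entirely from Proposition~\ref{prop:two-variable-conjugation-properties}, which already establishes that $\jL\colon \TwadjLL \to \TwadjLR$ is an identity-on-objects isomorphism of categories with inverse $\jR$. The corollary is then just the general fact that an isomorphism of categories preserves and reflects isomorphisms, once we unwind what ``invertible'' means in each of the two categories of adjunctions.

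First I would record the two ingredients supplied by the proposition. Since $\jL$ is a functor it preserves identities and composition; being identity-on-objects, it sends the identity morphism $\id_T$ at the object $T \leftsidedadj H$ to the identity morphism $\id_H$ at that same object in $\TwadjLR$, and likewise $\id_{T'} \mapsto \id_{H'}$. Next I would note the key bookkeeping point: composition in $\TwadjLL$ is ordinary vertical composition of natural transformations, whereas composition in $\TwadjLR$ is the \emph{reversed} composition. Consequently a pair $H' \xRightarrow{\psi} H$, $H \xRightarrow{\psi'} H'$ is a mutually inverse pair of morphisms in $\TwadjLR$ precisely when $\psi$ and $\psi'$ are mutually inverse as natural transformations: the two defining equations $\psi \odot \psi' = \id$ and $\psi' \odot \psi = \id$ are exactly the two identity equations of $\TwadjLR$, merely read in the opposite order. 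The analogous statement for $\TwadjLL$ is immediate because its composition is the ordinary one.

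With these observations the forward direction is a one-line application of functoriality. Suppose $\theta^{-1}$ is inverse to $\theta$, so that $\theta\odot\theta^{-1} = \id$ and $\theta^{-1}\odot\theta = \id$; these are exactly the equations asserting that $\theta$ and $\theta^{-1}$ are mutually inverse morphisms in $\TwadjLL$. Applying the functor $\jL$, and using $\jL(\id_T) = \id_H$ and $\jL(\id_{T'}) = \id_{H'}$, turns these into the equations asserting that $\jL(\theta)$ and $\jL(\theta^{-1})$ are mutually inverse in $\TwadjLR$, which by the previous paragraph says exactly that $\jL(\theta^{-1})$ is the inverse natural transformation of $\jL(\theta)$.

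For the converse I would run the same argument through the inverse functor $\jR$. If $\jL(\theta)$ and $\jL(\theta^{-1})$ are mutually inverse natural transformations, hence mutually inverse in $\TwadjLR$, then applying $\jR$ and using that it is inverse to $\jL$ (so $\jR(\jL(\theta)) = \theta$ and $\jR(\jL(\theta^{-1})) = \theta^{-1}$) shows that $\theta$ and $\theta^{-1}$ are mutually inverse in $\TwadjLL$, i.e.\ as natural transformations. I do not expect any genuine obstacle here; the only thing requiring a moment's care is the composition-reversal in $\TwadjLR$, which is precisely what makes ``inverse in $\TwadjLR$'' coincide with ``inverse natural transformation'' rather than introduce a spurious flip of direction.
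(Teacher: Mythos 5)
Your argument is correct and is exactly the route the paper intends: the paper offers no separate proof, calling the corollary ``immediate'' from Proposition~\ref{prop:two-variable-conjugation-properties}, and what you have written is precisely the unwinding of that implication, including the one point that genuinely needs a moment's care (that the reversed composition in \(\TwadjLR\) makes ``mutually inverse in \(\TwadjLR\)'' coincide with ``mutually inverse as natural transformations'').
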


\subsection{Conjugation for composite adjunctions}
\label{subsec:composite-adj-conjugation}

This is the key general construction of the paper.

Suppose we have closed monoidal categories $\cC$ and $\cD$ together with four pairs of adjoint functors of the following form
\begin{align*}
    F_1 \colon & \cC \rightleftarrows \cE \colon U_1,
    &
    F_2 \colon & \cB \rightleftarrows \cC \colon U_2,
    &
    F'_1 \colon & \cD \rightleftarrows \cE \colon U'_1,
    &
    F'_2 \colon & \cB \rightleftarrows \cD \colon U'_2,
\end{align*}
together with two further functors
\[
    K\colon \cA \to \cC,\quad
    K'\colon \cA \to \cD,
\]
then, then by the construction in Section~\ref{subsec:new-adjunctions-from-old}, we have a pair of left-sided adjunctions of two variables:
\begin{gather*}
    F_1 \circ {\otimes_\cC} \circ (K \times F_2)
    \leftsidedadj
    U_2 \circ {\leftlolli_\cC} \circ (K \times U_1);\\
    F'_1 \circ {\otimes_\cD} \circ (K' \times F'_2)
    \leftsidedadj
    U'_2 \circ {\leftlolli_\cD} \circ (K' \times U'_1).
\end{gather*}
By the Proposition~\ref{prop:two-variable-conjugation-properties}, conjugation then gives a bijection between the sets of natural transformations
\begin{multline*}
    \bigl\{\theta\colon F_1 \circ {\otimes} \circ (K \times F_2)
    \nattrans
    F'_1 \circ {\otimes} \circ (K' \times F'_2)\bigr\}
    \\
    \cong
    \bigl\{\phi \colon U'_2 \circ {\leftlolli} \circ (K' \times U'_1)
    \nattrans
    U_2 \circ {\leftlolli} \circ (K \times U_1)\bigr\}  
    \tag{$\dagger$}
    \label{eq:conjugation-bijection}
\end{multline*}
that is, a natural transformation with components of the form 
\[
    \theta_{a,b} \colon F_1 \bigl(K(a) \otimes F_2(b)\bigr) \to F'_1 \bigl(K'(a) \otimes F'_2(b)\bigr)
    \tag{$\dagger.1$}
\]
is conjugate to one with components of the form
\[
    \phi_{a,e} \colon U'_2 \bigl(K'(a) \leftlolli U'_1(e)\bigr) \to U_2 \bigl(K(e) \leftlolli U_1(e)\bigr).
    \tag{$\dagger.2$}
\]
We will call these \define{conjugate shapes}.

Conjugation is given in the following way, where the colours on the functors are just there to help the reader. 
\[
    \raisebox{-.45\height}{\input{\figdir/conjugate_forward_pre.pstex_t}}
    ~\mapsto~
    \raisebox{-.45\height}{\input{\figdir/conjugate_forward.pstex_t}}
\]
\[
    \raisebox{-.45\height}{\input{\figdir/conjugate_backward.pstex_t}}
    ~\mapsfrom~
    \raisebox{-.45\height}{\input{\figdir/conjugate_backward_pre.pstex_t}}
\]
Writing this down in components is not particularly easy, as the expressions get rather large.

Let us now turn to applying this construction.

\section{Examples}
\label{sec:examples}

We can now apply the general construction of Section~\ref{subsec:composite-adj-conjugation} to the examples we are interested in, these are essentially the examples of Fausk, Hu and May.  We will first look at conjugate shapes from common strings of adjunctions and then focus on a certain internal adjunction and on the projection formula.

\subsection{Examples of conjugate shapes from strings of adjunctions}
Let's now look at some examples, which are essentially the examples in the paper of Fausk, Hu and May~\cite{FauskHuMay:Isomorphisms}.

Suppose that we have left closed monoidal categories $\cX$ and $\cY$ together with a functor $f^\ast \colon \cY \to \cX$.  The existence of various strings of adjunctions which include \(f^\ast\) will allow us to write down various conjugate shapes as in the previous section.  We will start with an example where it is spelt out.

Let's start with the basic example of having an adjunction \(f^\ast\dashv f_\ast\).  Then in the construction of Section~\ref{subsec:composite-adj-conjugation} we can take the following four adjunctions
\begin{align*}
    \id \colon & \cX \rightleftarrows \cX \colon \id,
    &
    f^\ast \colon & \cY \rightleftarrows \cX \colon f_\ast,
    &
    f^\ast \colon & \cY \rightleftarrows \cX \colon f_\ast,
    &
    \id \colon & \cY \rightleftarrows \cY \colon \id,
\end{align*}
together with the two functors
\[
    f^*\colon \cY \to \cX,\quad
    \id\colon \cY \to \cY.
\]
The result of the construction is that conjugation gives a bijection between natural transformations with components of shape
\[
    f^\ast(y) \otimes f^\ast(y') \to  f^\ast(y \otimes y')
\]
and natural transformations with components shape
\[
    y \leftlolli f_*(x) \to f_*(f^\ast(y) \leftlolli x).
\]
A diagram of going one way is given in Section~\ref{subsec:internal-adjunction} below.

With other, similar strings of adjunctions, we can, similarly, form other composite adjoints to use in the construction of Section~\ref{subsec:composite-adj-conjugation}.  Here are some examples, essentially those from Fausk, Hu and May~\cite{FauskHuMay:Isomorphisms}, starting with the one we've just done.  (Note unlike Fausk, Hu and May we are not assuming the monoidal structures are symmetric which clarifies the cases a little, we will add in the symmetry below.)  In each case, conjugation gives a bijection between natural transformations whose components have the shape of the morphism on the left and natural transformations whose components have the shape of the morphism on the right.  By comparing these to the general form \eqref{eq:conjugation-bijection} in Section~\ref{subsec:composite-adj-conjugation} it should be clear which four adjunctions and two functors have been used in each case.
\begin{align}
    \intertext{$f^\ast\dashv f_\ast$:}
    f^\ast(y) \otimes f^\ast(y') &\to  f^\ast(y \otimes y');
    &
    y \leftlolli f_*(x) &\to f_*(f^\ast(y) \leftlolli x).
    \label{eq:first-conjs}
    \\[0.5em]
    \intertext{$f_!\dashv f^\ast$:}
    f_!(f^\ast(y) \otimes x) &\to y \otimes f_!(x);
    &
    f^\ast(y \leftlolli y') &\to f^\ast(y) \leftlolli f^\ast(y').
    \label{eq:second-conjs}
    \\[0.5em]
    \intertext{$f_!\dashv f^\ast\dashv f_\ast$:}
    f_!(x \otimes f^\ast(y))&\to f_!(x) \otimes y;
    &
    f_!(x) \leftlolli y &\to f_\ast(x \leftlolli f^\ast(y)).
    \label{eq:third-conjs}
    \\[0.5em]
    \intertext{$f^\ast\dashv f_\ast \dashv f^!$:}
    f_*(x \otimes f^\ast(y)) &\to f_*(x) \otimes y;
    &
    f_*(x) \leftlolli y &\to f_*(x \leftlolli f^!(y)).
    \label{eq:fourth-conjs}
    \\[0.5em]
    \intertext{$f^\ast\dashv f_\ast \dashv f^!$:}
    f_*(f^\ast(y) \otimes x) &\to y \otimes f_*(x);
    &
    f^!(y \leftlolli y') &\to f^\ast(y) \leftlolli f^!(y').
    \label{eq:fifth-conjs}
\end{align}
Note that it is the same string of adjunctions used in  (\ref{eq:fourth-conjs}) and (\ref{eq:fifth-conjs}), however, the various functors are used in different places.  We think of the string of adjunctions in (\ref{eq:third-conjs}) and (\ref{eq:fourth-conjs}) as being different as we are thinking of \(f^\ast\) as a distinguished functor, because in practice it will be monoidal.

There is nothing special about the choice of the morphism directions.  We get similar conjugation bijections when we switch the direction on both sides.

We can observe that if $\cX$ and $\cY$ are \emph{braided} monoidal categories -- in particular, \emph{symmetric} -- then a natural transformation with the form of the left-hand side of (\ref{eq:second-conjs}) is canonically isomorphic to a natural transformation with the form of the left-hand side of (\ref{eq:third-conjs}).  Thus, thinking of these two natural transformations as being essentially the same, in this braided monoidal case we can think of a string of adjunctions \(f_!\dashv f^\ast\dashv f_\ast\) as giving rise to `conjugate triads' of natural transformations, namely the left hand side of \eqref{eq:second-conjs} with the right hand sides of \eqref{eq:second-conjs} and \eqref{eq:third-conjs}.  In the same way, in the braided monoidal case,  (\ref{eq:fourth-conjs}) and (\ref{eq:fifth-conjs}) give rise to `conjugate triads' of natural transformations.  These are the conjugate triads of Fausk, Hu and May~\cite[Section~4]{FauskHuMay:Isomorphisms}.

%\section{Internal adjunctions and the projection formula}
\subsection{An internal adjunction for strong monoidal functors}
\label{subsec:internal-adjunction}

Having an adjunction \(F\colon  \cC \rightleftarrows \cD \cocolon U\) means having an isomorphism of hom-sets \(\cD(F(C), D) \cong \cC(C, U(D))\).  Having an `internal adjunction' means using internal homs rather than hom-sets, but we can't generally posit an isomorphism between the internal homs \(F(C) \leftlolli_{\cD} D\) and \(C \leftlolli_{\cC} U(D)\) because they generally live in different categories.  This means that we need to use some functor to compare them, for example we can compare  \(F(C) \leftlolli_{\cD} D\) and \(F(C \leftlolli_{\cC} U(D))\).   

Now if we have a monoidal functor \(f^\ast \colon \cY \to \cX\) then we have the structural natural transformation \(\monstr{f^\ast}\) of shape \(f^\ast(y) \otimes f^\ast(y') \to f^\ast(y \otimes y')\) that is, a natural transformation \(\monstr{f^\ast} \colon {\otimes} \circ (f^\ast \times f^\ast) \nattrans f^\ast \circ {\otimes}\) which is drawn as follows.
\[
    \raisebox{-.45\height}{\input{\figdir/monoidal_structure.pstex_t}}
\]
This will satisfy appropriate axioms, but we are not interested in those here.  The functor \(f^\ast\) is strong monoidal precisely when the natural transformation \(\monstr{f^\ast}\) is invertible.

If the functor \(f^\ast\) has a right adjoint \(f_\ast\) then we are in the situation of \eqref{eq:first-conjs} and \(\monstr{f^\ast}\) has a conjugate of shape \(y \leftlolli f_*(x) \to f_*(f^\ast(y) \leftlolli x)\), thus of the form \({\leftlolli}\circ(\id \times f_\ast) \nattrans f_\ast \circ {\leftlolli} \circ (f^\ast \times \id)\); this is the following.
\[
    \raisebox{-.45\height}{\input{\figdir/internal_adjunction.pstex_t}}
\]
By Corollary~\ref{cor:two-conjugate-inverse} we know that the natural transformation \(\monstr{f^\ast}\) is invertible if and only its conjugate is, and in that case the inverse of the conjugate is the conjugate of the inverse.  Thus the strong monoidality of \(f^\ast\) corresponds to the above natural transformation giving an internal adjunction between \(f^\ast\) and \(f_\ast\), ie., \(y \leftlolli f_*(x) \cong f_*(f^\ast(y) \leftlolli x)\).  This framework gives a good way of keeping track of the natural transformations that are witnessing such isomorphisms.

\subsection{Strong closed monoidal functors and the projection formula}
As above, if \(f^\ast \colon \cY \to \cX\) is a monoidal functor then there is the structure natural transformation \(\monstr{f^\ast} \colon {\otimes} \circ (f^\ast \times f^\ast) \nattrans f^\ast \circ {\otimes}\).  If \(\cX\) and \(\cY\) are left closed monoidal then we can construct a `mate' %with respect to the tensor-internal-hom adjunctions,
\(\mate{\monstr{f^\ast}} \colon f^\ast \circ {\leftlolli} \nattrans {\leftlolli} \circ (f^{\ast\op} \times f^\ast)\), so that the components are of the form \(f^\ast(y \leftlolli y') \to f^\ast(y) \leftlolli f^\ast(y')\).  We can call this the closed structure operator.  This is pictured as follows.
\[
    \mate{\monstr{f^\ast}} := \raisebox{-.45\height}{\input{\figdir/closed_monoidal_str_def.pstex_t}}
\]
This natural transformation \(\mate{\monstr{f^\ast}}\) may or may not be invertible, regardless of whether or not \({\monstr{f^\ast}}\) is.  If \(\mate{\monstr{f^\ast}}\) is invertible then we say that \(f^\ast\) is \define{strong left closed monoidal}.  One case of note is when \(\cX\) and \(\cY\) are both \emph{compact} closed monoidal, i.e.~have duals, in that case, \(f^\ast\) being strong monoidal implies that it is strong left closed monoidal.

If, furthermore, \(f^\ast\) has a left adjoint \(f_!\colon \cX \to \cY\) then the monoidal structure transformation \(\monstr{f^\ast} \colon {\otimes} \circ (f^\ast \times \id) \circ (\id \times f^\ast) \nattrans f^\ast \circ {\otimes}\) has a `mate' \(\pi\colon f_! \circ {\otimes} \circ (f^\ast \times \id) \nattrans {\otimes} \circ (\id \times f_!)\) which has components of the form \(f_!(f^\ast(y) \otimes x) \to y \otimes f_!(x)\).  This is known as the \define{left projection operator} or \define{left Hopf operator}~\cite{BruguieresVirelizier:HopfMonadsMonoidal}.  This is pictured, together with its conjugate, as follows.
\[
    \pi \coloneq \raisebox{-.45\height}{\input{\figdir/H_r.pstex_t}}
    \qquad
    \jL(\pi) = \raisebox{-.45\height}{\input{\figdir/pi_conjugate.pstex_t}}
\]
It should be clear that \(\jL(\pi)\) the conjugate of the projection operator is equal to the closed structure operator \(\mate{\monstr{f^\ast}}\).  

When the projection operator is a natural isomorphism, so that  we have
\begin{equation}
    \label{eq:projection-formula}
    \pi\colon f_!(f^\ast(y) \otimes x) \isoarrow y \otimes f_!(x)
\end{equation}
for all $x$ and $y$, it is said that \define{the projection formula holds}.  From Corollary~\ref{cor:two-conjugate-inverse} it is immediate that \(\pi\) is invertible if and only if \(\mate{\monstr{f^\ast}}\) is, with their inverses being conjugate.  This then gives a good conceptual and visual framework for the following known result, where we have given precise meaning to `conjugate'.

\begin{thm}
    Suppose that $f_! \colon \cX \rightleftarrows \cY \cocolon f^\ast$ is an adjunction between left closed monoidal categories such that the right adjoint $f^\ast$ is monoidal, then the projection formula~\eqref{eq:projection-formula} holds if and only if $f^\ast$ is strong left closed monoidal.  In that case the inverse of the left projection operator is given by the conjugate of the closed structure operator.
\end{thm}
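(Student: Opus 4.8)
The plan is to exhibit the left projection operator $\pi$ and the closed structure operator $\mate{\monstr{f^\ast}}$ as a conjugate pair of natural transformations between two left-sided adjunctions of two variables, and then to read off both halves of the statement from Corollary~\ref{cor:two-conjugate-inverse}. The first move is to place $\pi$ inside the composite-adjunction framework of Section~\ref{subsec:composite-adj-conjugation}, recognising the present data as case~\eqref{eq:second-conjs}, the one governed by the single adjunction $f_! \dashv f^\ast$.

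Concretely, I would record the four ordinary adjunctions and two comparison functors feeding the construction. Taking $\cC = \cX$ and $\cD = \cY$ as the two closed monoidal categories, the four adjunctions are $F_1 = f_! \dashv f^\ast = U_1$, the identities $F_2 = \id_\cX \dashv \id_\cX = U_2$ and $F'_1 = \id_\cY \dashv \id_\cY = U'_1$, and $F'_2 = f_! \dashv f^\ast = U'_2$, while the two functors are $K = f^\ast \colon \cY \to \cX$ and $K' = \id_\cY$. Unwinding the formulas of Section~\ref{subsec:composite-adj-conjugation} then yields two left-sided adjunctions $T \leftsidedadj H$ and $T' \leftsidedadj H'$ with
\[
    T(y,x) = f_!\bigl(f^\ast(y) \otimes x\bigr), \quad T'(y,x) = y \otimes f_!(x), \quad H(y,e) = f^\ast(y) \leftlolli f^\ast(e), \quad H'(y,e) = f^\ast(y \leftlolli e).
\]
With these identifications $\pi$ is precisely an element of $\Nat(T,T')$ and the closed structure operator $\mate{\monstr{f^\ast}}$ is precisely an element of $\Nat(H',H)$, matching the two conjugate shapes of~\eqref{eq:second-conjs}.

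The crux of the argument is the identity $\jL(\pi) = \mate{\monstr{f^\ast}}$, already signalled in the discussion preceding the theorem. I would verify this diagrammatically: both transformations are mates of the monoidal structure transformation $\monstr{f^\ast}$, and the surface-diagram expression for $\jL(\pi)$ — built by inserting the unit and counit of $f_! \dashv f^\ast$ and then applying the zig-zag identities of the four constituent adjunctions — collapses to exactly the diagram defining $\mate{\monstr{f^\ast}}$. Tracking which unit and counit is used in which slot is the only genuinely substantive step; everything else is formal bookkeeping.

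Granting that identity, the conclusion is immediate. Applying Corollary~\ref{cor:two-conjugate-inverse} with $\theta = \pi$, the transformation $\pi$ is invertible if and only if its conjugate $\jL(\pi) = \mate{\monstr{f^\ast}}$ is invertible; that is, the projection formula~\eqref{eq:projection-formula} holds if and only if $f^\ast$ is strong left closed monoidal. The same corollary supplies the final assertion about inverses: it gives $\jL(\pi^{-1}) = \bigl(\mate{\monstr{f^\ast}}\bigr)^{-1}$, so the two inverses are themselves a conjugate pair, and $\pi^{-1}$ is recovered as the conjugate of $\bigl(\mate{\monstr{f^\ast}}\bigr)^{-1}$. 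I expect the diagrammatic verification of $\jL(\pi) = \mate{\monstr{f^\ast}}$ to be the only part requiring real care; once it is in place the theorem is a direct corollary of the two-variable conjugation machinery.
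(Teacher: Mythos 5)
Your proposal is correct and follows essentially the same route as the paper: the paper's argument is precisely to realise \(\pi\) and \(\mate{\monstr{f^\ast}}\) as conjugate natural transformations between the two composite left-sided adjunctions of two variables arising from \(f_!\dashv f^\ast\) as in~\eqref{eq:second-conjs}, note that \(\jL(\pi)=\mate{\monstr{f^\ast}}\), and invoke Corollary~\ref{cor:two-conjugate-inverse}. Your explicit identification of the four adjunctions and two functors feeding the construction of Section~\ref{subsec:composite-adj-conjugation} is accurate and, if anything, spells out more of the bookkeeping than the paper does.
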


\appendix

\section{Proofs of the closed monoidal category statements}
\label{app:proofs}

Here we gives the proofs of the theorems from Section~\ref{subsec:closed-monoidal-cats}.

\begin{proof}[Proof of Theorem~\ref{thm:extranatural-coev-ev}]  To prove that the first diagram commutes, we just fill it with diagrams that are known to commute, as shown in the diagram in Figure~\ref{fig:digram-ev-coev}.
In the diagram one of the triangles commutes trivially, whilst the other triangle commutes due to the triangle identity satisfied by the evaluation and coevaluation owing to them being the counit and unit of an adjunction.  One interior square commutes by the definition of $[f, \secondc]$ given in Section~\ref{subsec:closed-monoidal-cats}; whilst the other two interior squares commute due to the naturality of $\ev_{c,\secondc}$ in $\secondc$.  Thus the exterior square commutes.

\begin{figure}[thb]
    \[
\begin{tikzcd}[ampersand replacement=\&,column sep=large]
{c\otimes [ c', \secondc]} \&\& {c'\otimes [c', \secondc]} \\
\& {c \otimes [c', \secondc]} \\
\& {c\otimes[ c, c\otimes [c',\secondc]]} \\
\& {c\otimes [c, c'\otimes [c', \secondc]]} \\
{c\otimes [c, \secondc]} \&\& \secondc
\arrow["{c \otimes [f,  \secondc]}", tail reversed, no head, from=5-1, to=1-1]
\arrow["{ f\otimes [c',\secondc]}"', tail reversed, no head, from=1-3, to=1-1]
\arrow["{\ev_{c', \secondc}}"', tail reversed, no head, from=5-3, to=1-3]
\arrow["{\ev_{c,\secondc}}", tail reversed, no head, from=5-3, to=5-1]
\arrow["{c \otimes [c, \ev_{c',\secondc}]}"', tail reversed, no head, from=5-1, to=4-2]
\arrow["{c\otimes [c, f \otimes[c',  \secondc]}", tail reversed, no head, from=4-2, to=3-2]
\arrow["{c\otimes \coev_{c,[c', \secondc]}}", tail reversed, no head, from=3-2, to=1-1]
\arrow["\id"', tail reversed, no head, from=2-2, to=1-1]
\arrow["{f \otimes [c', \secondc]}"', tail reversed, no head, from=1-3, to=2-2]
\arrow["{\ev_{c, c'\otimes[c', \secondc]}}"{pos=0.7}, curve={height=-12pt}, tail reversed, no head, from=1-3, to=4-2]
\arrow["{\ev_{c,c \otimes [c', \secondc]}}", tail reversed, no head, from=2-2, to=3-2]
\end{tikzcd}
    \]
    \caption{Diagram for the proof of Theorem~\ref{thm:extranatural-coev-ev}}
    \label{fig:digram-ev-coev}
\end{figure}
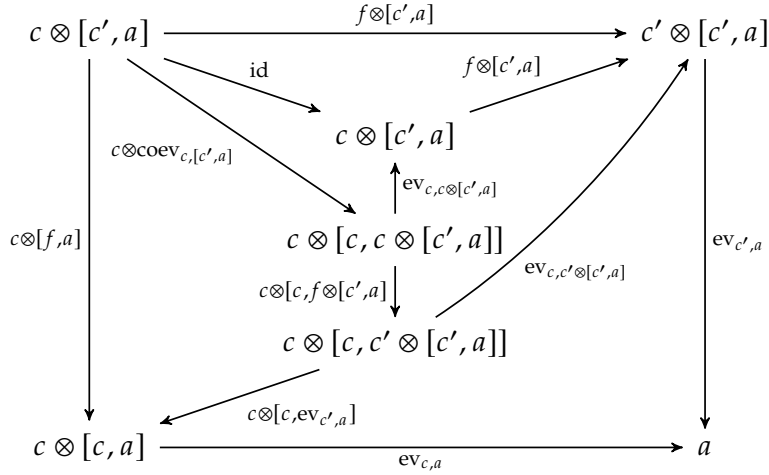

Similarly, the other square can be filled in a dual fashion.
\end{proof}
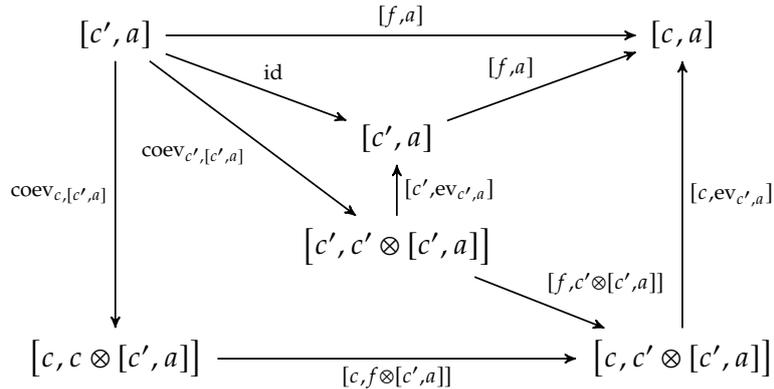
\begin{figure}[tbh]
    \[
        % https://q.uiver.app/#q=WzAsNixbMCwwLCJbWCcsWV0iXSxbMiwwLCJbWCxZXSJdLFswLDMsIlxcYmlnbFtYLCBYXFxvdGltZXMgW1gnLFldXFxiaWdyXSJdLFsyLDMsIlxcYmlnbFtYLCBYJ1xcb3RpbWVzIFtYJyxZXVxcYmlncl0iXSxbMSwxLCJbWCcsWV0iXSxbMSwyLCJcXGJpZ2xbWCcsIFgnXFxvdGltZXMgW1gnLFldXFxiaWdyXSJdLFswLDIsIlxcY29ldl97WCxbWCcsWV19IiwyXSxbMiwzLCJbWCwgZlxcb3RpbWVzIFtYJyxZXV0iLDJdLFswLDEsIltmLCBZXSJdLFszLDEsIltYLCBcXGV2X3tYJyxZfV0iLDJdLFswLDUsIlxcY29ldl97WCcsW1gnLFldfSIsMl0sWzUsNCwiW1gnLFxcZXZfe1gnLFl9XSIsMl0sWzAsNCwiXFxpZCJdLFs0LDEsIltmLFldIl0sWzUsMywiW2YsIFgnXFxvdGltZXMgW1gnLCBZXV0iXV0=
\begin{tikzcd}[ampersand replacement=\&]
	{[c',\secondc]} \&\& {[c,\secondc]} \\
	\& {[c',\secondc]} \\
	\& {\bigl[c', c'\otimes [c',\secondc]\bigr]} \\
	{\bigl[c, c\otimes [c',\secondc]\bigr]} \&\& {\bigl[c, c'\otimes [c',\secondc]\bigr]}
	\arrow["{\coev_{c,[c',\secondc]}}"', from=1-1, to=4-1]
	\arrow["{[c, f\otimes [c',\secondc]]}"', from=4-1, to=4-3]
	\arrow["{[f, \secondc]}", from=1-1, to=1-3]
	\arrow["{[c, \ev_{c',\secondc}]}"', from=4-3, to=1-3]
	\arrow["{\coev_{c',[c',\secondc]}}"', from=1-1, to=3-2]
	\arrow["{[c',\ev_{c',\secondc}]}"', from=3-2, to=2-2]
	\arrow["\id", from=1-1, to=2-2]
	\arrow["{[f,\secondc]}", from=2-2, to=1-3]
	\arrow["{[f, c'\otimes [c', \secondc]]}", from=3-2, to=4-3]
\end{tikzcd}
    \]
    \caption{Diagram for the proof of Theorem~\ref{thm:closed-mon-cat-med-brow}}
    \label{fig:diagram-closed-monoidal}
\end{figure}
\begin{proof}[Proof of Theorem~\ref{thm:closed-mon-cat-med-brow}]
    Using \eqref{diag:functor-defn} and Theorem~\ref{thm:extranatural-coev-ev}, it follows that any left closed monoidal category gives rise to the internal hom functor   $[{-}, {-}]$ satisfying the specified conditions.
    
    To show that the data given in the statement of the theorem gives a closed monoidal category it suffices to show that for the data given in the statement, the diagram in~(\ref{diag:functor-defn}) commutes.  This can be show by filling in the diagram with commuting squares and triangles as in Figure~\ref{fig:diagram-closed-monoidal}.
\end{proof}

\section{A double-category and profunctor perspective}
\label{app:double-category}

\enlargethispage*{2em}
In this appendix, more categorical knowledge is assumed. 
%, in particular, in the second part some knowledge of profunctors and double categories will be assumed and in the third section equipments.  
We first see a characterization of extranatural transformations in terms of certain natural transformations and then put that in the context of the monoidal double category of functors and profunctors, seeing how the surface diagrams find a natural home there.  We then go on to see how our surface diagrams relate to those of Street in the monoidal bicategory of profunctors, using the fact that we have an equipment.

\subsection{Extranatural transformations as natural transformations}

The definition given of an extranatural transformation, Definition~\ref{defn:extranaturality}, is a useful one and one that absracts key properties of things like evaluation and coevaluation; however, it is not particularly \emph{formal} in that it is not clear how to generalize it to situations other than that of the \(2\)-category of categories.  We can give an alternative, equivalent definition of an extranatural transformation as a natural transformation between profunctors and this is the content of the following theorem.  This leads us below to be able to work more formally with extranatural transformations via the monoidal double category of profunctors and allow us to interpret the surface diagrams there.

This characterization of extranatural transformations is known to experts.  It is implicit in a talk~\cite{Shulman:ExtraordinaryCMSTalk} and a blog post~\cite{Shulman:ExtraordinaryBlogPost} of Shulman, and in a draft introduction~\cite{BaezMellies} to an unwritten paper of Baez and Mellies.  Street~\cite[Section~3]{StreetFunctorialCalculus} indirectly describes the characterization.
\begin{thm}
    \label{thm:extranatural-as-natural}
    Given functors
    \[
        P\colon \cC\times \cC^\op\times \cA\to \cD
        \quad\text{and}\quad
        Q\colon \cA\times \cB^{\op}\times \cB \to \cD
    \]
    there is a canonical one-to-one correspondence between extranatural transformations of the form \(\beta\colon P \extranat Q\) and natural transformations with components of the form
    \[
        \cC(c, c') \times \cA(a, a') \times \cB(b, b')
        \xrightarrow{\hatbeta_{c, c', a, a', b, b'}} 
        \cD\bigl(P(c, c', a), Q(a', b, b')\bigr).
    \]
    This correspondence is constructed as follows.  
    \begin{itemize}
        \item 
        For an extranatural transformation $\beta \colon P \extranat Q$ define the natural transformation component 
        \[
            \hatbeta_{c, c', a, a', b, b'}(f, h, g)\colon P(c, c', a) \to Q(a', b, b')
        \]
        to be any of the composites in the diagram of Definition~\ref{defn:extranaturality} going from the top left to the bottom right.
        \item For a natural transformation $\hatbeta$ of the above form, define the extranatural transformation $\beta \colon P \extranat Q$ by
        \[
            \beta_{c,a,b} := \hatbeta_{c,c,a,a,b,b}(\id_c, \id_a, \id_b) \colon P(c, c, a) \to Q(a, b, b).
        \]
    \end{itemize}
\end{thm}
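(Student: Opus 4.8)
The plan is to recognise the statement as a multi-variable instance of the Yoneda principle. First I would check that both the source \(\cC(c,c')\times\cA(a,a')\times\cB(b,b')\) and the target \(\cD\bigl(P(c,c',a),Q(a',b,b')\bigr)\) are functors \(\cC^\op\times\cC\times\cA^\op\times\cA\times\cB^\op\times\cB\to\Set\): tracing variances through \(P\), \(Q\) and \(\cD({-},{-})\) shows both are contravariant in \(c,a,b\) and covariant in \(c',a',b'\), so that \(\hatbeta\) is genuinely a natural transformation of functors of the same variance. The guiding observation is that a natural transformation out of such a hom-valued functor is pinned down by its value on identities, and the extranaturality condition is exactly what is needed for a family of ``identity values'' to extend consistently. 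I would then show that the two displayed constructions are well defined and mutually inverse.

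For the direction \(\beta\mapsto\hatbeta\), the first task is well-definedness: the prescription ``any composite from the top-left \(P(c,c',a)\) to the bottom-right \(Q(a',b,b')\) of the diagram of Definition~\ref{defn:extranaturality}'' makes sense precisely because that diagram commutes, which is exactly the extranaturality of \(\beta\) together with its naturality in the middle variable. The second task is to verify that \(\hatbeta\) is natural in all six arguments. Since each component \(\hatbeta_{c,c',a,a',b,b'}(f,h,g)\) is assembled from a single component of \(\beta\) together with morphisms \(P({-})\) and \(Q({-})\), naturality in the covariant slots \(c',a',b'\) and the contravariant slots \(c,a,b\) reduces to the functoriality of \(P\) and \(Q\); the only genuinely nontrivial checks are those in the ``repeated-variable'' directions, and these follow from the extranaturality squares once more.

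For the direction \(\hatbeta\mapsto\beta\), setting \(f=\id_c\), \(h=\id_a\), \(g=\id_b\) (and hence \(c=c'\), \(a=a'\), \(b=b'\)) collapses every \(P\)- and \(Q\)-leg of the diagram to an identity, so the distinguished composite is just the middle arrow \(\beta_{c,a,b}\); thus \(\beta_{c,a,b}=\hatbeta_{c,c,a,a,b,b}(\id,\id,\id)\) recovers \(\beta\) on the nose, giving one round-trip. For the other round-trip I would use the naturality of \(\hatbeta\): writing a general triple \((f,h,g)\) as the image of a triple of identities under the hom-functor action, via \(f=\cC(c,f)(\id_c)\) and likewise for \(h\) and \(g\), and pushing these actions across \(\hatbeta\) expresses \(\hatbeta_{c,c',a,a',b,b'}(f,h,g)\) as a composite of \(P\)- and \(Q\)-morphisms with \(\hatbeta\) evaluated at identities, i.e.\ with \(\beta\). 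Inspecting this composite shows it is exactly the arrow produced by the construction \(\beta\mapsto\hatbeta\), so the two maps are inverse.

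The main obstacle is bookkeeping rather than conceptual: one must track six slots with their variances simultaneously and confirm that the naturality squares of \(\hatbeta\) correspond precisely — neither more nor less — to the three squares (extranaturality in \(c\), naturality in \(a\), extranaturality in \(b\)) of Definition~\ref{defn:extranaturality}. The key structural point, which I would emphasise, is that the single diagram of that definition does double duty: its commutativity both guarantees path-independence (the well-definedness of \(\hatbeta\)) and encodes the naturality needed to run the Yoneda reduction, so that no additional coherence has to be checked.
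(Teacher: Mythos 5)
Your proposal is correct and follows essentially the same route as the paper: both arguments reduce to the observation that naturality of $\hatbeta$ in its six arguments is equivalent to the commutativity of the diagram in Definition~\ref{defn:extranaturality} (path-independence plus the three squares), after which the two round trips are immediate --- the paper likewise dismisses the mutual-inverse step as ``a simple use of naturality''. Your Yoneda framing, that a transformation out of hom-functors is determined by its values on identities, is a pleasant gloss on, rather than a departure from, the paper's direct verification that extranaturality of $\beta$ and naturality of $\hatbeta$ imply one another.
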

Before going on to the proof, we should note that the key thing to observe here is that naturality for a natural transformation $\hatbeta$ of the above form means precisely the following. 
% If we have morphisms like so
% \begin{alignat*}{2}
%     c'' \xrightarrow{f_1} c &\xrightarrow{f} c' \xrightarrow{f_2} c'''
%     &\qquad\text{in }&\cC,\\
%     a'' \xrightarrow{h_1} a &\xrightarrow{h} a' \xrightarrow{h_2} a'''
%     &\text{in }&\cA,\\
%     b'' \xrightarrow{g_1} b &\xrightarrow{g} b' \xrightarrow{g_2} b'''
%     &\text{in }&\cB,
% \end{alignat*}
If we have strings of morphisms
\[
    c'' \xrightarrow{f_1} c \xrightarrow{f} c' \xrightarrow{f_2} c''',
    \quad
    a'' \xrightarrow{h_1} a \xrightarrow{h} a' \xrightarrow{h_2} a''',
    \quad
    b'' \xrightarrow{g_1} b \xrightarrow{g} b' \xrightarrow{g_2} b''',
\]
in $\cC$, $\cA$ and $\cB$ respectively,
then 
\begin{multline*}
    \hatbeta_{c'',c''',a'',a''',b'',b'''}(f_2 \circ f \circ f_1, h_2 \circ h \circ h_1, g_2 \circ g\circ g_1)\\
    =
    Q(h_2, g_1, g_2) \circ \hatbeta_{c, c', a, a', b, b'}(f, h, g) \circ P(f_1, f_2, h_1).
\end{multline*}
We now prove the theorem.
\begin{proof}
    We need to check that the assignments $\beta \mapsto \hatbeta$ and $\hatbeta \mapsto \beta$ are both well-defined and are mutually inverse.  If we know that they are well-defined then the fact that they are mutually inverse is almost immediate, just requiring a simple use of naturality in one direction.  So it suffices to show that they are well-defined assignments.

    Firstly, we need to show that if $\beta$ is an extranatural transformation then $\hatbeta$ is a natural transformation.  Actually we will just show that it is natural in the second variable as the other cases are similar.  Here the second equality uses the commutativity of the left-hand square in Definition~\ref{defn:extranaturality}.
    \begin{align*}
        \hatbeta_{c, c''', a, a, b, b}&(f_2 \circ f, h, g) 
        \\
        &:= Q(h, \id_b, g)\circ \beta_{c''',a,b} \circ P(f_2\circ f, \id_{c'''}, \id_{a})\\
        &= Q(h, \id_b, g)\circ \beta_{c''',a,b} \circ P(f_2, \id_{c'''}, \id_{a}) \circ P(f, \id_{c'''}, \id_{a})\\
        &= Q(h, \id_b, g)\circ \beta_{c',a,b} \circ P(\id_{c'}, f_2, \id_{a}) \circ P(f, \id_{c'''}, \id_{a})\\
        &= Q(h, \id_b, g)\circ \beta_{c',a,b} \circ P(f, \id_{c'}, \id_{a}) \circ P(\id_{c}, f_2, \id_{a})\\
        &=: \hatbeta_{c, c', a, a, b, b}(f, h, g) \circ P(\id_{c}, f_2, \id_{a}),
    \end{align*}
    as required.

    Finally, we need to show that if $\beta$ is natural then the associated $\beta$ is extranatural.  We will just show that the left-hand square in Definition~\ref{defn:extranaturality} commutes, the commutativity of the other squares are shown similarly.  The two equalities below use naturality of $\beta$ in its first and second arguments.
    \begin{align*}
        \beta_{c', a, b} \circ P(f, \id_{c'}, \id_a) 
        &:= \hatbeta_{c', c', a, a, b, b}(\id_{c'}, \id_{a}, \id_{b}) \circ P(f, \id_{c'}, \id_a)\\
        &= \hatbeta_{c, c', a, a, b, b}(f, \id_{a}, \id_{b}) \\
        &= \hatbeta_{c, c, a, a, b, b}(\id_{c}, \id_{a}, \id_{b}) \circ P(\id_c, f, \id_a)\\
        &=: \beta_{c, a, b} \circ P(\id_c, f, \id_a),
    \end{align*}
    as required.
\end{proof}

\subsection{A double categorical view of surface diagrams}
In Theorem~\ref{thm:extranatural-as-natural} above, we have seen that for functors 
\[
    P\colon \cC\times \cC^\op\times \cA\to \cD
    \quad\text{and}\quad
    Q\colon \cA\times \cB^{\op}\times \cB \to \cD
\]
an extranatural transformation \(\beta\colon P \extranat Q\) is the same as a natural transformation with components of the form
\[
    \cC(c, c') \times \cA(a, a') \times \cB(b, b')
    \to 
    \cD(P(c, c', a), Q(a', b, b')).
\]
 
Now recall that functors and profunctors form a double category.  By double category we mean what is sometimes called a pseudo-double category (see Shulman's paper~\cite{Shulman:FramedBicategories} or Koudenburg's thesis~\cite{Koudenburg:Thesis}).  We will use the convention that a profunctor \(M \colon \cA \profto \cB\) is a functor \(\cA^{\op} \times \cB \to \Set\).  We will write profunctors vertically and functors horizontally, maintaining our convention of right to left and bottom to top.  We will make use of Myers'~\cite{Myers:StringDiagrams} string diagram notation for double categories, so a \(2\)-cell can be notated in the usual arrow notation and in the string diagram notation as follows.
\[
% https://q.uiver.app/#q=WzAsNCxbMCwwLCJcXGNEIl0sWzEsMCwiXFxjQiJdLFsxLDEsIlxcY0EiXSxbMCwxLCJcXGNDIl0sWzIsMSwiTSIsMix7InN0eWxlIjp7ImJvZHkiOnsibmFtZSI6ImJhcnJlZCJ9fX1dLFszLDAsIk0nIiwwLHsic3R5bGUiOnsiYm9keSI6eyJuYW1lIjoiYmFycmVkIn19fV0sWzEsMCwiRyciLDJdLFsyLDMsIkciXSxbNCw1LCJcXGJldGEiLDEseyJzaG9ydGVuIjp7InNvdXJjZSI6MjAsInRhcmdldCI6MjB9LCJzdHlsZSI6eyJib2R5Ijp7Im5hbWUiOiJub25lIn0sImhlYWQiOnsibmFtZSI6Im5vbmUifX19XV0=
\begin{tikzcd}[baseline={([yshift=-.4em]current bounding box.center)}, ampersand replacement=\&]
	\cD \& \cB \\
	\cC \& \cA
	\arrow[""{name=0, anchor=center, inner sep=0}, "M"', "\shortmid"{marking}, from=2-2, to=1-2]
	\arrow[""{name=1, anchor=center, inner sep=0}, "{M'}", "\shortmid"{marking}, from=2-1, to=1-1]
	\arrow["{G'}"', from=1-2, to=1-1]
	\arrow["G", from=2-2, to=2-1]
	\arrow["\gamma"{description}, draw=none, from=0, to=1]
\end{tikzcd}
\qquad \qquad
\raisebox{-.45\height}{\input{\figdir/sample_double_two_cell.pstex_t}}
\]
In the double category of profunctors, such a \(2\)-cell \(\gamma\) is defined to be a natural transformation with components
\[
    \gamma_{a,b}\colon M(a, b) \to M'\bigl(G(a), G'(b)\bigr).
\]
The \emph{bicategory} of profunctors has duals: the dual of a category \(\cA\) is its opposite category \(\cA^{\op}\), with the evaluation and coevaluation profunctors, \(E_{\cA}\colon \cA \times \cA^{\op} \profto \termcat\) and \(N_{\cA} \colon \termcat \profto  \cA^{\op}  \times \cA\), both represented by the hom functor \(\cA({-}, {-}) \colon  \cA^{\op}  \times \cA \to \Set\), as indeed is the identity profunctor \(\Id_{\cA} \colon \cA \profto \cA\).  Street~\cite{StreetFunctorialCalculus} formalizes such structure as an autonomous bicategory.

Putting all of the above together, we see that an extranatural transformation \(\beta \colon P \extranat Q\), where \(P\) and \(Q\) are as above, is the same as a \(2\)-cell of the following form.
\[
% https://q.uiver.app/#q=WzAsNCxbMCwwLCJcXGNEIl0sWzEsMCwiXFxjQVxcdGltZXMgXFxjQl57XFxvcH1cXHRpbWVzIFxcY0IiXSxbMSwxLCJcXGNDXFx0aW1lcyBcXGNDXntcXG9wfSBcXHRpbWVzIFxcY0EiXSxbMCwxLCJcXGNEIl0sWzIsMSwiRV97XFxjQ30gXFx0aW1lcyBcXGlkX3tcXGNBfSBcXHRpbWVzIE5fe1xcY0J9IiwyLHsic3R5bGUiOnsiYm9keSI6eyJuYW1lIjoiYmFycmVkIn19fV0sWzMsMCwiXFxpZF97XFxjRH0iLDAseyJzdHlsZSI6eyJib2R5Ijp7Im5hbWUiOiJiYXJyZWQifX19XSxbMSwwLCJRIiwyXSxbMiwzLCJQIl0sWzQsNSwiXFxiZXRhIiwxLHsic2hvcnRlbiI6eyJzb3VyY2UiOjIwLCJ0YXJnZXQiOjIwfSwic3R5bGUiOnsiYm9keSI6eyJuYW1lIjoibm9uZSJ9LCJoZWFkIjp7Im5hbWUiOiJub25lIn19fV1d
\begin{tikzcd}[baseline={([yshift=-.4em]current bounding box.center)}, ampersand replacement=\&]
	\cD \& {\cA\times \cB^{\op}\times \cB} \\
	\cD \& {\cC\times \cC^{\op} \times \cA}
	\arrow[""{name=0, anchor=center, inner sep=0}, "{E_{\cC} \times \Id_{\cA} \times N_{\cB}}"', "\shortmid"{marking}, from=2-2, to=1-2]
	\arrow[""{name=1, anchor=center, inner sep=0}, "{\Id_{\cD}}", "\shortmid"{marking}, from=2-1, to=1-1]
	\arrow["Q"', from=1-2, to=1-1]
	\arrow["P", from=2-2, to=2-1]
	\arrow["\beta"{description}, draw=none, from=0, to=1]
\end{tikzcd}
\qquad
\raisebox{-.45\height}{\input{\figdir/extranatural_double_string_diag.pstex_t}}
\]
If we take the string diagram and utilize the third dimension going into the page or screen to represent the monoidal structure, as we did in the body of the paper, then we recover the diagram we used for such an extranatural transformations.  We can see how the usual cup and cap notation for the coevaluation and evaluation morphisms in the vertical -- profunctor -- bicategory are used in the right-hand face.
\[
    \raisebox{-.45\height}{\input{\figdir/extranatural_sample.pstex_t}}
\]
From a formal category theory, this means that for a monoidal \(2\)-category \(\mathbb{C}\) the analogue of extranatural transformations can be defined if \(\mathbb{C}\) is the horizontal \(2\)-category of a monoidal double category where the vertical bicategory is monoidal with duals in an appropriate sense, where this needs formalizing.  This is the direction of Street's work~\cite{StreetFunctorialCalculus} though he did not use the double categorical element.  We will now look at some of what he did.

\subsection{The relationship with Street's diagrams}

In this part of the appendix we will see how the diagrams drawn by Street~\cite{StreetFunctorialCalculus} relate to the diagrams used here.  His diagrams look different as, for instance, they involve loops.

Street works in the general context of what he calls an autonomous monoidal bicategory.  We will work concretely in the context of profunctors, but there should be no significant difference to the general setting.

Given profunctors \(M \colon \cC\times \cC^{\op} \times \cA \profto \cD\) and \(M' \colon \cA\times \cB^{\op}\times \cB \profto \cD\), Street defines an \define{extraordinary \(2\)-cell} from \(M\) to \(M'\) to be a \(2\)-cell in the bicategory of profunctors of the form shown on the left below, which we can also depict as the string diagram on the right.
\[
% https://q.uiver.app/#q=WzAsMyxbMCwwLCJcXGNEIl0sWzEsMSwiXFxjQVxcdGltZXMgXFxjQl57XFxvcH1cXHRpbWVzIFxcY0IiXSxbMCwyLCJcXGNDXFx0aW1lcyBcXGNDXntcXG9wfSBcXHRpbWVzIFxcY0EiXSxbMiwxLCJFX3tcXGNDfSBcXHRpbWVzIFxcSWRfe1xcY0F9IFxcdGltZXMgTl97XFxjQn0iLDIseyJsYWJlbF9wb3NpdGlvbiI6NjAsInN0eWxlIjp7ImJvZHkiOnsibmFtZSI6ImJhcnJlZCJ9fX1dLFsxLDAsIk0iLDIseyJzdHlsZSI6eyJib2R5Ijp7Im5hbWUiOiJiYXJyZWQifX19XSxbMiwwLCJNJyIsMCx7ImN1cnZlIjotMiwic3R5bGUiOnsiYm9keSI6eyJuYW1lIjoiYmFycmVkIn19fV0sWzEsNSwiIiwyLHsic2hvcnRlbiI6eyJzb3VyY2UiOjIwLCJ0YXJnZXQiOjQwfX1dXQ==
\begin{tikzcd}[ampersand replacement=\&,column sep=tiny]
	\cD \\
	\& {\cA\times \cB^{\op}\times \cB} \\
	{\cC\times \cC^{\op} \times \cA}
	\arrow["{E_{\cC} \times \Id_{\cA} \times N_{\cB}}"'{pos=0.6}, "\shortmid"{marking}, from=3-1, to=2-2]
	\arrow["M"', "\shortmid"{marking}, from=2-2, to=1-1]
	\arrow[""{name=0, anchor=center, inner sep=0}, "{M'}", "\shortmid"{marking}, curve={height=-12pt}, from=3-1, to=1-1]
	\arrow[shorten <=12pt, shorten >=24pt, Rightarrow, from=2-2, to=0]
\end{tikzcd}
% \]
% Drawing this as a string diagram horizontally we get the following picture.
% \[
    \qquad
    \raisebox{-.45\height}{\input{\figdir/extraordinary_two_cell_string_diagram.pstex_t}}
\] 
Utilizing the third dimension to depict the monoidal product direction, Street represents such an extraordinary \(2\)-cell with the following surface diagram; the top and bottom ridge of the surface are coloured green to emphasize the connection with the above string diagram.
\[
    \raisebox{-.45\height}{\input{\figdir/street_surface_diagram_green.pstex_t}}
\]

To relate this to the notion of extranatural transformation and the surface diagrams we have been using for them, we should recall that the double category of functors and profunctors actually forms an equipment, so each functor \(F\colon \cA \to \cB\) has an associated \define{companion} profunctor \(\companion{F}\colon \cA \profto \cB\), given by \(\companion{F}(a, b) := \cB\bigl(F(a), b\bigr)\), and an associated \define{conjoint} profunctor \(\conjoint{F}\colon \cB \profto \cA\), given by \(\conjoint{F}(b, a) := \cB\bigl(b, F(a)\bigr)\).  We will concentrate on companions at this point.   There are structural \(2\)-cells relating the functor \(F\) to its companion \(\companion{F}\) and, following Myers~\cite{Myers:StringDiagrams}, these are drawn in string diagrams as follows.
\[
    \raisebox{-.45\height}{\input{\figdir/companion_cell_string_1.pstex_t}}
    \qquad\qquad
    \raisebox{-.45\height}{\input{\figdir/companion_cell_string_2b.pstex_t}}
\]
The two ways that these can be composed both give rise to identity \(2\)-cells.

From the characterization of extranatural transformations as double category \(2\)-cells, as in the previous subsection, we can see that an extranatural transformation \(P \extranat Q\) is essentially the same as an extraordinary \(2\)-cell going in the opposite direction between the associated companions, i.e. from \(\companion{Q}\) to \(\companion{P}\).  In string diagrams the correspondence is easy to see.
\[
    \raisebox{-.45\height}{\input{\figdir/extranatural_as_extraordinary_string_1.pstex_t}}
    \quad
    \mapsto
    \quad
    \raisebox{-.45\height}{\input{\figdir/extranatural_as_extraordinary_string_2.pstex_t}}
\]
If we have a functor whose domain is a product, $F\colon \cA \times \cA' \times \cA'' \to \cB$, then the structural \(2\)-cells for its companion profunctor $\companion{F} \colon \cA \times \cA' \times \cA'' \profto \cB$ can be drawn in surface diagrams as follows.
\[
    \raisebox{-.45\height}{\input{\figdir/companion_cell_surface_1.pstex_t}}
    \qquad\qquad
    \raisebox{-.45\height}{\input{\figdir/companion_cell_surface_2.pstex_t}}
\]
So now when the string diagram from the right hand side above is expanded out of the plane to form a surface diagram we get the following.
\[
    \raisebox{-.45\height}{\input{\figdir/street_surface_plus_companions_new.pstex_t}}
\]
Thus we recover our standard way of representing an extranatural transformation and see how it relates to the style of diagram used by Street.

% \subsection{Isomorphims between homs}
% In this final part of this appendix we will look at how this double category picture allows a swtich from the unit-counit view of an adjunction of two variables to an isomorphism-of-homs view.  First recap the how this is done in the classical adjunction case, following Myers~\cite{Myers:StringDiagrams}.

% Given a classical adjunction \(F\colon \cC \rightleftarrows \cD \cocolon U\) specified by a unit and counit, then, using the structural \(2\)-cells for the companion $\companion{F}\colon \cC \profto \cD$ and the conjoint $\conjoint{U} \colon \cC \profto \cD$, we get the following two mutually inverse \(2\)-cells.
% \[
%    \pstex{adj_double_iso_1}
%    \qquad\qquad
%    \pstex{adj_double_iso_2}
% \]
% These are expressing an isomorphism of profunctors $\companion{F}\cong \conjoint{U}$, i.e.
% \[
%     \cD\bigl(F({-}), {-}\bigr) \cong \cC\bigl({-}, U({-})\bigr) \colon \cC \profto \cD.
% \]  

% \[
%    \pstex{adj_double_iso_surface_2}
%    \qquad\qquad
%    \pstex{adj_double_iso_surface_1}
% \]
\enlargethispage*{2em}

\printbibliography

@phdthesis{Aravanis:thesis,
	author = {Aravanis, Christos},
	school = {University of Sheffield},
	title = {{H}opf algebras, {H}opf monads and derived categories of sheaves},
	year = {2018},
	url = {https://etheses.whiterose.ac.uk/23287/}
}

@unpublished{AravanisWillerton:Hopf-monads,
	author = {Aravanis, Christos and Willerton, Simon},
	title = {{H}opf algebras, {H}opf monads and derived categories of sheaves},
	note = {in preparation},
}

@Misc{Arkor:quiver,
  author = {Nathanael Arkor},
  note   = {graphical editor for commutative diagrams},
  title  = {{quiver}},
  url    = {https://q.uiver.app/},
}

@Article{BaezDolan:HDA-III,
  author     = {Baez, John C. and Dolan, James},
  journal    = {Advances in Mathematics},
  title      = {Higher-Dimensional Algebra {III}. {$n$}-Categories and the Algebra of Opetopes},
  year       = {1998},
  issn       = {0001-8708},
  month      = may,
  number     = {2},
  pages      = {145--206},
  volume     = {135},
  _file      = {:Baez_1998 - Higher Dimensional Algebra III.n Categories and the Algebra of Opetopes.pdf:PDF},
  _publisher = {Elsevier BV},
  doi        = {10.1006/aima.1997.1695},
}

@unpublished{BaezMellies,
  author = {Baez, John C. and Melli\`es, Paul-Andr\'e},
  month  = mar,
  _note   = {`draft introduction'},
  title  = {Theories With Duality (DRAFT VERSION ONLY)},
  year   = {2010},
  _file   = {:Baez2010 - Theories with Duality (DRAFT VERSION ONLY).pdf:PDF},
  url    = {https://math.ucr.edu/home/baez/duality.pdf},
}

@article{BruguieresVirelizier:HopfMonadsMonoidal,
	Author = {Brugui{\`e}res, Alain and Lack, Steve and Virelizier, Alexis},
	Title = {Hopf monads on monoidal categories},
	_Date-Added = {2017-09-06 10:11:38 +0000},
	_Date-Modified = {2017-09-06 10:13:39 +0000},
	Doi = {10.1016/j.aim.2011.02.008},
	Journal = {Advances in Mathematics},
	_Issn = {0001-8708},
	_Journal = {Adv. Math.},
	_Mrclass = {18C20 (16T05 18D10 18D15)},
	_Mrnumber = {2793022},
	_Mrreviewer = {Joost Vercruysse},
	Number = {2},
	Pages = {745--800},
	_Url = {http://dx.doi.org/10.1016/j.aim.2011.02.008},
	Volume = {227},
	Year = {2011},
	_Bdsk-Url-1 = {http://www.ams.org/mathscinet-getitem?mr=2793022},
	}

@Article{ChengGurskiRiehl,
  author    = {Eugenia Cheng and Nick Gurski and Emily Riehl},
  journal   = {Journal of K-Theory},
  title     = {Cyclic multicategories, multivariable adjunctions and mates},
  year      = {2014},
  _month     = {jan},
  number    = {2},
  pages     = {337--396},
  volume    = {13},
  _abstract  = {A multivariable adjunction is the generalisation of the notion of a 2-variable adjunction, the classical example being the hom/tensor/cotensor trio of functors, to n C 1 functors of n variables. In the presence of multivariable adjunctions, natural transformations between certain composites built from multivariable functors have "dual" forms. We refer to corresponding natural transformations as multivariable or parametrised mates, generalising the mates correspondence for ordinary adjunctions, which enables one to pass between natural transformations involving left adjoints to those involving right adjoints. A central problem is how to express the naturality (or functoriality) of the parametrised mates, giving a precise characterization of the dualities so-encoded. We present the notion of "cyclic double multicategory" as a structure in which to organise multivariable adjunctions and mates. While the standard mates correspondence is described using an isomorphism of double categories, the multivariable version requires the framework of "double multicategories". Moreover, we show that the analogous isomorphisms of double multicategories give a cyclic action on the multimaps, yielding the notion of "cyclic double multicategory". The work is motivated by and applied to Riehl's approach to algebraic monoidal model categories.},
  doi       = {10.1017/is013012007jkt250},
  _eprint    = {1208.4520},
  _eprinttype = {arxiv},
  _file      = {:/Users/simonwillerton/Downloads/mates.pdf:PDF},
  _keywords  = {Adjunctions, mates, multicategories Mathematics Subject Classification 2010: 18A40, 18D99},
  publisher = {Cambridge University Press ({CUP})},
}

@Article{EilenbergKelly,
  author       = {Samuel Eilenberg and G.M Kelly},
  journal      = {Journal of Algebra},
  title        = {A generalization of the functorial calculus},
  year         = {1966},
  _month        = {5},
  number       = {3},
  pages        = {366--375},
  volume       = {3},
  creationdate = {2023-07-21T09:54:28},
  doi          = {10.1016/0021-8693(66)90006-8},
  file         = {:Eilenberg_1966 - A Generalization of the Functorial Calculus.pdf:PDF},
  publisher    = {Elsevier {BV}},
}

@article{FauskHuMay:Isomorphisms,
	Author = {Fausk, H. and Hu, P. and May, J. P.},
	Pages = {107--131},
	Title = {Isomorphisms between left and right adjoints},
	Journal = {{Theory and Applications of Categories}},
	Volume = {11},
	Number = {4},
	Year = {2003},
	url = {http://www.tac.mta.ca/tac/volumes/11/4/11-04abs.html},
	_Date-Modified = {2018-05-11 13:28:35 +0000},
	_Fjournal = {Theory and Applications of Categories},
	_Issn = {1201-561X},
	_Journal = {Theory Appl. Categ.},
	_Mrclass = {18D10 (14A10 55U35)},
	_Mrnumber = {1988072},
	_Mrreviewer = {Gabriele Vezzosi},
	}

@Book{HoveyModelCategories,
  author    = {Hovey, Mark},
  publisher = {American Mathematical Society},
  title     = {Model Categories},
  year      = {2007},
  _month     = {oct},
  doi       = {10.1090/surv/063},
  _file      = {:Hovey_2007 - Model Categories.pdf:PDF},
}

@Article{KanAdjointFunctors,
  author    = {Daniel M. Kan},
  journal   = {Transactions of the American Mathematical Society},
  title     = {Adjoint functors},
  year      = {1958},
  number    = {2},
  pages     = {294--329},
  volume    = {87},
  doi       = {10.1090/s0002-9947-1958-0131451-0},
  _file      = {:Kan_1958 - Adjoint Functors.pdf:PDF},
  _publisher = {American Mathematical Society ({AMS})},
}

@InCollection{KellyStreetReview2Cats,
  author       = {G. M. Kelly and Ross Street},
  booktitle    = {Category Seminar},
  publisher    = {Springer Berlin Heidelberg},
  title        = {Review of the elements of 2-categories},
  year         = {1974},
  pages        = {75--103},
  _creationdate = {2023-08-18T12:15:06},
  doi          = {10.1007/bfb0063101},
  _file         = {:Kelly1974 - Review of the Elements of 2 Categories.pdf:PDF},
}

@PhdThesis{Koudenburg:Thesis,
  author    = {Koudenburg, Seerp Roald},
  title     = {Algebraic weighted colimits},
  year      = {2013},
  _doi       = {10.48550/ARXIV.1304.4079},
  keywords  = {Category Theory (math.CT), FOS: Mathematics, FOS: Mathematics},
  eprint = {1304.4079},
  eprinttype = {arxiv},
}

@Book{MacLaneCategoriesWork,
  author    = {Mac Lane, Saunders},
  publisher = {Springer New York},
  title     = {Categories for the Working Mathematician},
  year      = {1998},
  edition   = {2},
  _isbn      = {978-0-387-98403-2},
  series    = {Graduate Texts in Mathematics},
  volume    = {5},
  doi       = {10.1007/978-1-4757-4721-8},
  _file      = {:Mac_Lane_1978 - Categories for the Working Mathematician.pdf:PDF},
}

@Article{Myers:StringDiagrams,
  author    = {Myers, David Jaz},
  title     = {String Diagrams For Double Categories and Equipments},
  year      = {2016},
  _copyright = {arXiv.org perpetual, non-exclusive license},
  _doi       = {10.48550/arXiv.1612.02762},
  _file      = {:Myers2016 - String Diagrams for Double Categories and Equipments.pdf:PDF},
  _groups    = {reading_list},
  _keywords  = {Category Theory (math.CT), FOS: Mathematics, FOS: Mathematics, 18A99},
  _publisher = {arXiv},
  eprint = {1612.02762},
  eprinttype = {arxiv},
}

@misc{nlab:closed_monoidal_category,
  author = {{nLab authors}},
  title = {closed monoidal category},
  howpublished = {\url{https://ncatlab.org/nlab/show/closed+monoidal+category}},
  note = {\href{https://ncatlab.org/nlab/revision/closed+monoidal+category/52}{Revision 52}},
  _month = aug,
  year = 2023
}

@Article{Shulman:FramedBicategories,
  author        = {Shulman, Michael A.},
  journal       = {Theory and Applications of Categories},
  title         = {Framed bicategories and monoidal fibrations},
  year          = {2008},
  month         = jun,
  number        = {18},
  pages         = {650--738},
  volume        = {20},
  _archiveprefix = {arXiv},
  _copyright     = {arXiv.org perpetual, non-exclusive license},
  doi           = {10.48550/ARXIV.0706.1286},
  _eprint        = {https://arxiv.org/abs/0706.1286},
  _file          = {:Shulman2008 - Framed Bicategories and Monoidal Fibrations.pdf:PDF},
  _keywords      = {Category Theory (math.CT), FOS: Mathematics, 18D05 (Primary), 18D30, 18D10 (Secondary)},
  _primaryclass  = {math.CT},
  _publisher     = {arXiv},
  url           = {http://www.tac.mta.ca/tac/volumes/20/18/20-18abs.html},
}

@Misc{Shulman:ExtraordinaryBlogPost,
  author       = {Mike Shulman},
  howpublished = {$n$-Category Caf\'e blog post},
  month        = {3},
  title        = {Extraordinary $2$-Multicategories},
  year         = {2010},
  url          = {https://golem.ph.utexas.edu/category/2010/03/extraordinary_2multicategories.html},
}

@Misc{Shulman:ExtraordinaryCMSTalk,
  author = {Mike Shulman},
  note   = {talk at 2010 Canadian Mathematical Society Summer Meeting},
  title  = {Extraordinary multicategories},
  year   = {2010},
  file   = {:Shulman_ExtraordinaryCMSTalk - Extraordinary Multicategories.pdf:PDF},
  url    = {https://home.sandiego.edu/~shulman/papers/cms2010talk.pdf},
}

@Article{StreetFunctorialCalculus,
  author    = {Ross Street},
  journal   = {Applied Categorical Structures},
  title     = {Functorial Calculus in Monoidal Bicategories},
  year      = {2003},
  number    = {3},
  pages     = {219--227},
  volume    = {11},
  _abstract  = {The definition and calculus of extraordinary natural transformations is extended to a context internal to any autonomous monoidal bicategory. The original calculus is recaptured from the geometry of the monoidal bicategory V-Mod whose objects are categories enriched in a cocomplete symmetric monoidal category V and whose morphisms are modules.},
  doi       = {10.1023/a:1024247613677},
  _file      = {:Calculus2003 - Functorial Calculus in Monoidal Bicategories.pdf:PDF},
  _keywords  = {Mathematics Subject Classifications (2000): 18D05, 18D20, 18D10 enriched category, dual, dinatural transformation, Gray monoid},
  publisher = {Springer Science and Business Media {LLC}},
}

@article{Willerton:HopfMonads,
	Author = {Willerton, Simon},
	Title = {A diagrammatic approach to {H}opf monads},
	Journal = {The Arabian Journal for Science and Engineering. Section C.},
	eprint = {0807.0658},
	eprinttype = {arxiv},
	Number = {2},
	Pages = {561--585},
	Volume = {33},
	Year = {2008},
}

@Misc{xfig,
  note  = {diagramming tool},
  title = {Xfig},
  url   = {https://sourceforge.net/projects/mcj/},
}

\end{document}